\theoremstyle{plain}
\newtheorem{theorem}{Theorem}[section]
\newtheorem{corollary}[theorem]{Corollary} 
\newtheorem{proposition}[theorem]{Proposition}
\newtheorem{lemma}[theorem]{Lemma}
\newtheorem{conjecture}[theorem]{Conjecture}
\theoremstyle{definition}
\newtheorem{definition}[theorem]{Definition}
\newtheorem{example}[theorem]{Example}
\newtheorem{remark}[theorem]{Remark}
\newtheorem{question}[theorem]{Question}
\newcommand{\enm}[1]{\ensuremath{#1}}          %
\newcommand{\CC}{\enm{\mathbb{C}}}
\newcommand{\NN}{\enm{\mathbb{N}}}
\newcommand{\PP}{\enm{\mathbb{P}}}
\renewcommand{\phi}{\varphi}
\renewcommand{\theta}{\vartheta}
\renewcommand{\epsilon}{\varepsilon}
\newcommand{\old}[1]{}
\date{}
\subjclass[2020]{05E40, 05E16, 05E14, 14N05, 14L35.}
\keywords{Hessians, Kayal's algorithm, Fermat polynomial, stabilizer group.}
\title{On the Koiran-Skomra's question about Hessians}
\author{Edoardo Ballico}
\address{Universit\`{a} di Trento, Via Sommarive 14,  38123 Povo (Trento), Italy}
\email{edoardo.ballico@unitn.it}
\author{Emanuele Ventura}
\address{Politecnico di Torino, Dipartimento di Scienze Matematiche ``G. L. Lagrange'', Corso Duca degli Abruzzi 24\\
10129 Torino, Italy}
\email{emanuele.ventura@polito.it}
\begin{document}

\maketitle

\begin{abstract}
We give a negative answer to a question of Koiran and Skomra about Hessians, motivated by Kayal's algorithm for the equivalence problem to the Fermat polynomial. We conjecture that our counterexamples are the only ones. We also study a local version of their question. 
\end{abstract}

\section{Introduction}

\noindent Let $g\in \mathbb C[x_1,\ldots, x_n]_d$ be a homogeneous polynomial of degree $d$ and let $\mathrm{Hess}(g)$ be its Hessian matrix. The {\it Hessian map} is the polynomial map 
\[
H: \CC[x_1,\ldots, x_n]_d\longrightarrow \CC[x_1,\ldots, x_n]_{n(d-2)},
\] 
defined by $H(g) = \det(\mathrm{Hess}(g))$. 

Kayal's algorithm takes as input a homogeneous polynomial $g\in \mathbb C[x_1,\ldots,x_n]_d$ and determines whether $g$ is in the $\mathrm{GL}(\mathbb C^n)$-orbit of (or {\it it is equivalent to}) the Fermat polynomial $f = x_1^d + \cdots + x_n^d$; if it is so, the algorithm outputs linearly independent linear forms $\ell_i\in \CC[x_1,\ldots,x_n]_1$ such that $g = \sum_{i=1}^n \ell_i^d$. See \cite{Kayal} and \cite[\S 3.1]{KS} for a detailed account. 
This algorithm is based on three steps: 
\begin{enumerate}
\item[(1)] Check that the Hessian determinant $H(g)$ is nonzero and can be factored
as $H(g) = \alpha \prod_{i=1}^n h_i^{d-2}$ where $h_i\in \mathbb C[x_1,\ldots,x_n]_1$ are linear forms. 
If this is not possible, reject. 

\item[(2)] Try to find complex constants $\alpha_i\in \mathbb C$ such that 
\[
g = \sum_{i=1}^n \alpha_i\ell_i^d. 
\]
If this is not possible, reject. 

\item[(3)] Declare $g$ to be equivalent to $f$ and output $\ell_i = \beta_i h_i$ where $\beta_i^d = \alpha_i$. 
\end{enumerate}

\noindent Motivated by Kayal's algorithm, Koiran and Skomra \cite[Question 1]{KS} asked the following question: 

\begin{question}[Koiran-Skomra]\label{quest}
Let $n\geq 2$ and $d\geq 3$. Let $f\in \mathbb C[x_1,\ldots, x_n]_d$ be such that $H(f) = \alpha (x_1\cdots x_n)^{d-2}$, for some $\alpha\neq 0$. 
\begin{center}
\emph{Does it follow that $f = \alpha_1x_1^d + \cdots + \alpha_n x_n^d$, for some 
constants $\alpha_j\in \mathbb C$?}
\end{center}
\end{question} 
This question is interesting also because it is so simple to state and has a delightful invariant theory flavour. Hessians of forms have been the subject of many classical works in algebraic 
geometry and commutative algebra (for instance, Hessians are related to Lefschetz properties of artinian Gorenstein algebras); see e.g. \cite[Chapter 7]{russo} and \cite{CRS}, along with the several references therein. Hessians are also naturally related to scheme-theoretic ranks of homogeneous polynomials \cite{DG,HMV}. Despite of this large body of results, we are not aware of any classical algebraic or geometric source taking this natural question into consideration. 

Employing the algorithm in \cite[\S 4.1]{KR} to decide whether the factorization in step (1) of Kayal's algorithm {\it exists} (and not to compute the explicit factorization), a positive answer to Question \ref{quest} would provide a polynomial time algorithm for the equivalence problem over $\mathbb C$. 
In their article \cite{KS}, Koiran and Skomra did provide a polynomial time algorithm for the equivalence problem over $\mathbb C$ in degree $d=3$, which was later extended by Koiran and Saha \cite{KSa} to $d>3$. However, their algorithm has to work in a very different way than Kayal's algorithm because of our Theorem \ref{negativeresult}: this shows that over any field $K\subset \CC$, the answer is negative for infinite pairs of integers $(n,d)$. An instance of interest in \cite{KS} is $d=3$: in this case, the answer is positive if and only if $n=2$ (Corollary \ref{cased=3}). 

Our monomial counterexamples are {\it homaloidal polynomials}, i.e. their first partial derivatives define a Cremona transformation. Theorem \ref{negativeresult} also shows that these monomials are {\it totally Hessian} \cite[Remark 3.5]{CRS}. Therefore 
they give examples of totally Hessian polynomials (although reducible) of arbitrarily large degree in any number of variables $n\geq 2$. 

In Theorem \ref{binaryodd} and Theorem \ref{binaryeven}, we deal with the case of singular binary homogeneous polynomials, providing the only ones whose Hessians have the desired form. Our approach is combinatorial
and we wonder whether an invariant theoretic strategy could be put in place. We conjecture that for any $n\geq 2$ the only smooth homogeneous polynomials satisfying the assumptions of Question \ref{quest} are the Fermat polynomials; see Conjecture
\ref{onlyfermat}.  

Computing the differential of the Hessian map, we formulate a local version of Koiran-Skomra's question, 
which has a positive answer when $d\geq n+1$ (Proposition \ref{prop:dim of ker fermat}). \\

\noindent {\bf Acknowledgements.}
We thank Enrico Carlini, Pascal Koiran, Francesco Russo, and Mateusz Skomra for useful discussions. The second author would like to thank Tony Iarrobino, Pedro Marques, Maria Evelina Rossi, and Jean Vall\`{e}s for organizing the nice workshop {\it Deformation of Artin algebras and Jordan type} within the AMS-EMS-SMF meeting in Grenoble on 18-22 July 2022. 

\section{Koiran-Skomra's question}
\subsection{Negative result} 

The counterexamples to Question \ref{quest} that we find are direct sums of homogeneous polynomials
of the form prescribed by the next result. 

\begin{theorem}\label{negativeresult}
Let $K\subset \mathbb C$ be a field. The answer to Question \ref{quest} over $K$ is negative for all $n\geq 2$ and $d\geq 3$ such that there exists $2\leq q\leq n$ with $q \mid d$. 
\begin{proof}
Let $d = qk$ with $k\geq 1$ (or $k\geq 2$ if $q=2$). Then define
\[
g = (x_1\cdots x_q)^{k} + x_{q+1}^d + \cdots + x_n^d\in K[x_1,\ldots, x_n].
\]
We show that 
\begin{equation}\label{firstidentityhessian}
H((x_1\cdots x_q)^{k}) = (1-d)(-k)^q(x_1\cdots x_q)^{d-2}\neq 0. 
\end{equation}
This is enough to conclude that $H(g)$ has the desired form. We first prove that every monomial in the determinant expansion of the Hessian above is $(x_1\cdots x_q)^{d-2}$. 

 To see this, fix an index $\ell\in \lbrace 1,\ldots, q\rbrace$ and look at the total degree in $x_{\ell}$ of an arbitrary monomial 
in the determinant expansion. By symmetry, we may assume $x_{\ell} = x_1$. 

Let $f_{i,j}= \left(\mathrm{Hess}((x_1\cdots x_q)^{k})\right)_{i,j}$ be the $(i,j)$-entry of the Hessian matrix of $(x_1\cdots x_q)^{k}$. Let $m$ be a monomial in the expansion of $H((x_1\cdots x_q)^{k})$. This is a product of entries $f_{i,j}$, where the product ranges over the indices of all rows and columns, by definition of determinant. 

Then exactly one of the following cases can occur: 
\begin{enumerate}
\item[(I)] $m$ is a multiple of the $(1,1)$-entry (this choice gives no contribution for $k=1$); 
\item[(II)] $m$ is a multiple of two distinct entries in the first row and in the first column. 
\end{enumerate}
In case (I), the total degree of $m$ with respect to $x_1$ is $(k-2) + (q-1)k = qk-2=d-2$. In case (II), the total degree of $m$ with respect to $x_1$ is $2(k-1)+(q-2)k=qk-2=d-2$. In other words, for any monomial in the expansion of $H((x_1\cdots x_q)^{k})$, the total degree of every variable is $d-2$. 

Let $C$ be the $q\times q$ matrix whose $(i,j)$-entry is the coefficient of the monomial $f_{i,j}$. One has
\begin{equation}\label{matrixofdegformon}
C =  \begin{bmatrix}
k(k-1) & k^2 & \cdots & k^2 \\
k^2 & k(k-1) &  \cdots & k^2 \\
\vdots & \vdots & \vdots & \vdots \\
k^2 & k^2 & \cdots & k(k-1) \\
\end{bmatrix}. 
\end{equation}
The argument above shows that 
\[
H((x_1\cdots x_q)^{k}) = \det(C)(x_1\cdots x_q)^{d-2}. 
\]
Write $C = -kI_{q} + k^2 J_q$, where $I_{q}$ is the $q\times q$ identity matrix and $J_q$ is the $q\times q$ matrix whose entries are all ones. Note that $J_q = {\bf 1}_q \cdot ({\bf 1}_q)^{T}$, where ${\bf 1}_q$ is the column vector with $q$ ones. 

\noindent One may write $C =  -kI_{q} \cdot C'$, where $C' = I_{q} - {\bf 1}_q\cdot (k{\bf 1}_q)^T$. Note that 
\begin{equation}\label{matrices}
\begin{bmatrix}
I_q &  0 & \\
(k{\bf 1}_q)^T & 1 \\
\end{bmatrix} 
\cdot 
\begin{bmatrix}
I_q  - ({\bf 1}_q)\cdot (k{\bf 1_q})^T&  -{\bf 1}_q & \\
0 & 1 \\
\end{bmatrix} 
\cdot 
\begin{bmatrix}
I_q  &  0 & \\
-(k{\bf 1}_q)^T & 1 \\
\end{bmatrix} 
= 
\begin{bmatrix}
I_q &  -{\bf 1}_q & \\
0 & 1 - (k{\bf 1}_q)^{T}\cdot {\bf 1}_q\\
\end{bmatrix}. 
\end{equation}
Then $\det(C')$ is the determinant of the left-hand side of \eqref{matrices}, thus the determinant of the right-hand side too. So $\det(C') = 1- (k{\bf 1}_q)^T\cdot {\bf 1}_q = 1-qk$ and $\det(C) = (1-d)(-k)^q\neq 0$, which shows \eqref{firstidentityhessian}. 

Finally, the polynomial $g$ has Hessian $H(g) = \alpha (x_1\cdots x_n)^{d-2}\neq 0$ and is not a smooth homogeneous polynomial over $K$. Hence $g$ cannot be equivalent to a Fermat polynomial over $K$.  
\end{proof}
\end{theorem}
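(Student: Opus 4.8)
The plan is to answer Question~\ref{quest} negatively by exhibiting an explicit counterexample rather than arguing abstractly. Writing $d = qk$, I would set
\[
g = (x_1\cdots x_q)^{k} + x_{q+1}^d + \cdots + x_n^d,
\]
and reduce the claim to two independent tasks: first, verifying that $H(g) = \alpha(x_1\cdots x_n)^{d-2}$ with $\alpha\neq 0$; second, verifying that $g$ is genuinely not of the form $\sum_i \alpha_i x_i^d$. The second task is the easy one: since $q\geq 2$, the polynomial $g$ contains the honest mixed monomial $x_1^{k}\cdots x_q^{k}$, which can never occur in a linear combination of pure $d$-th powers. Geometrically, this is reflected in the fact that $\{g=0\}$ is singular (e.g. the partials all vanish at a point with $x_1=0$ and all Fermat variables zero), so $g$ cannot be equivalent to the smooth Fermat polynomial.

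For the Hessian, I would first exploit that the summands involve disjoint sets of variables, so $\mathrm{Hess}(g)$ is block diagonal: one $q\times q$ block from $(x_1\cdots x_q)^{k}$, and a $1\times 1$ block $d(d-1)x_i^{d-2}$ for each Fermat variable $x_i$ with $i>q$. Hence $H(g)$ factors as $H((x_1\cdots x_q)^{k})\cdot\prod_{i>q} d(d-1)x_i^{d-2}$, and the whole problem localizes to computing the monomial Hessian $H((x_1\cdots x_q)^{k})$.

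This is the core of the argument. Setting $M = (x_1\cdots x_q)^{k}$, the entries of $\mathrm{Hess}(M)$ are $k(k-1)M/x_i^2$ on the diagonal and $k^2 M/(x_i x_j)$ off it. The step I expect to be the main obstacle, since it is where the combinatorics hides, is the observation that every term in the Leibniz expansion of the determinant selects one entry per row and column along a permutation $\sigma$, so its monomial part is the \emph{same} regardless of $\sigma$: namely $M^{q}/\prod_i(x_i x_{\sigma(i)}) = M^{q}/(x_1\cdots x_q)^2 = (x_1\cdots x_q)^{d-2}$. Consequently $H(M) = \det(C)\,(x_1\cdots x_q)^{d-2}$, where $C$ is the scalar coefficient matrix with diagonal $k(k-1)$ and off-diagonal $k^2$, i.e. $C = -kI_q + k^2 J_q$.

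It then remains to evaluate $\det(C)$, which I would do by diagonalizing the all-ones matrix: $J_q$ has eigenvalue $q$ on ${\bf 1}_q$ and $0$ with multiplicity $q-1$, so $C$ has eigenvalue $-k+k^2q = k(d-1)$ once and $-k$ with multiplicity $q-1$, giving $\det(C) = k(d-1)(-k)^{q-1} = (1-d)(-k)^{q}$; equivalently one factors out the rank-one update $C = -kI_q\cdot\bigl(I_q - {\bf 1}_q(k{\bf 1}_q)^{T}\bigr)$ and applies the matrix determinant lemma. This is nonzero for $d\geq 3$, so combining it with the Fermat factors yields $H(g) = \alpha(x_1\cdots x_n)^{d-2}$ with $\alpha\neq 0$, completing the first task and hence the counterexample.
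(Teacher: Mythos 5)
Your proposal is correct and takes essentially the same route as the paper: the same counterexample $g=(x_1\cdots x_q)^{k}+x_{q+1}^d+\cdots+x_n^d$, the same reduction of $H((x_1\cdots x_q)^{k})$ to $\det(C)\,(x_1\cdots x_q)^{d-2}$ with $C=-kI_q+k^2J_q$, and the same value $\det(C)=(1-d)(-k)^q$, which you obtain from the eigenvalues of $J_q$ while the paper uses an equivalent explicit rank-one-update identity. Two local touches of yours are actually cleaner than the paper's: the observation that every Leibniz term has monomial part $M^{q}/\prod_i(x_i x_{\sigma(i)})=(x_1\cdots x_q)^{d-2}$ independently of $\sigma$ replaces the paper's case analysis (I)/(II), and noting that $g$ contains the mixed monomial $(x_1\cdots x_q)^{k}$ settles that $g$ is not a diagonal form more directly than the paper's appeal to singularity.
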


\begin{remark}
A homogeneous polynomial $f\in \CC[x_1,\ldots,x_n]$ is {\it homaloidal} when its first partial derivatives define a birational gradient map $\mathrm{grad}(f): \PP^n \dashrightarrow \PP^n$ (i.e. a Cremona transformation). Equivalently, their {\it polar degree} is one. 

Our monomial counterexamples are homaloidal polynomials: their reduced polynomial 
$x_1\cdots x_n$ is homaloidal and the very deep \cite[Corollary 2]{DP} shows that the polar 
degree of a polynomial coincides with the one of the corresponding reduced polynomial. Theorem \ref{negativeresult} also shows that these polynomials are {\it totally Hessian} \cite[Remark 3.5]{CRS} (although reducible) of arbitrarily large degree in any number of variables $n\geq 2$. 
\end{remark}

\begin{remark}
The homogeneous polynomial $g$ in the proof of the above theorem is a sum of pairwise coprime monomials. Then \cite[Theorem 3.2]{CCG} shows that the complex Waring rank of $g$ satisfies the equality
\[
\mathrm{rk}_{K}(g)\geq  \mathrm{rk}_{\mathbb C}(g) =  (k+1)^{q-1} + (n-q) > n = \mathrm{rk}_{K}(f),
\]
where $f=x_1^d+\cdots + x_n^d$. 
\end{remark}

\begin{remark}\label{exampleothermons}
There exist homogeneous polynomials whose Hessians are monomials distinct from $(x_1\cdots x_n)^{d-2}$. Three examples: 
\begin{enumerate}
\item[(i)] for $k\geq 2$, let $f = x_2^{k-1}x_1^2 - x_3^{k+1}\in \CC[x_1,x_2,x_3]_{k+1}$. It has singularities at $(0:1:0)$ and $(1:0:0)$, and $H(f) = -2k^2(k+1)(k-1) x_1^2 x_2^{2(k-2)}x_3^{k-1}$. 

\item[(ii)] $g = x_1(x_2^2+x_1x_3 + \sum_{i=4}^{n} x_i^2)\in \CC[x_1,x_2,x_3,\ldots, x_n]_{3}$ is a singular cubic form
which splits as a product between a smooth quadric and a tangent hyperplane to it. Then $H(f) = -2^n x_1^n$. 

\item[(iii)] The determinant of a generic sub-Hankel matrix \cite[4.1.1]{CRS} is a homaloidal polynomial whose Hessian is a power of a variable \cite[Theorem 4.4]{CRS}. Since these are homaloidal of degree $d\geq 3$, they are singular. 

\end{enumerate}
\end{remark}

A case of interest in \cite{KS} is when $d=3$. 

\begin{corollary}\label{cased=3}
Let $d=3$. Then the answer to Question \ref{quest} is positive if and only if $n=2$. 
\begin{proof}
If $n\geq 3=d$, by Theorem \ref{negativeresult} the answer is negative. Suppose $n=2$ and let
$f = a_3x_1^3 + a_2x_1^2x_2 + a_1x_1x_2^2 + a_0x_2^3$. So
\[
H(f) = (-4a_{2}^2+12a_{3}a_{1})x_1^2+(-4a_{2}a_{1}+36a_{3}a_{0})x_1x_2+(-
4a_{1}^2+12a_{2}a_{0})x_2^2, 
\]
\noindent where $H(f) = \alpha x_1x_2\neq 0$. If $a_{2}=0$, then $a_{3}\neq 0$ and so 
$a_{1}=0$. But then $f = a_3 x_1^3 + a_0x_2^3$. If not, by symmetry, we may assume $a_{1}, a_{2}\neq 0$. Therefore $a_{0} = a_{1}^2/3a_{2}$ and $a_{3} = a_{2}^2/3a_{1}$, which implies that the coefficient of the monomial $x_1x_2$ in $H(f)$ is zero. 
\end{proof}
\end{corollary}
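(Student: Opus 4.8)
The plan is to prove the biconditional by treating its two directions separately. For the ``only if'' direction I would dispose of every $n\geq 3$ by a direct appeal to Theorem~\ref{negativeresult}. Since $d=3$ is prime, its only divisor lying in the range $2\leq q\leq n$ is $q=3$, and this value is admissible precisely when $n\geq 3$. Taking $q=3$ then produces the explicit counterexample $g=x_1x_2x_3+x_4^3+\cdots+x_n^3$, whose Hessian is a nonzero scalar multiple of $(x_1\cdots x_n)^{d-2}$ by \eqref{firstidentityhessian}, yet which is singular and hence not a Fermat polynomial. Thus the answer to Question~\ref{quest} is negative for all $n\geq 3$, and only $n=2$ can possibly give a positive answer.

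For the ``if'' direction, the strategy is a fully explicit computation with the general binary cubic $f=a_3x_1^3+a_2x_1^2x_2+a_1x_1x_2^2+a_0x_2^3$. I would form the $2\times 2$ Hessian matrix, expand the determinant, and recover the binary quadratic form $H(f)$ displayed in the statement. Requiring $H(f)=\alpha x_1x_2$ with $\alpha\neq 0$ is equivalent to demanding that the coefficients of $x_1^2$ and $x_2^2$ vanish while the $x_1x_2$-coefficient does not; concretely, this yields the two relations $a_2^2=3a_3a_1$ and $a_1^2=3a_2a_0$ together with the nonvanishing constraint $-4a_1a_2+36a_0a_3\neq 0$. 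The target is to conclude $a_1=a_2=0$, which immediately forces $f=a_3x_1^3+a_0x_2^3$, a form of the desired diagonal shape.

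The argument then proceeds by a short case split, which is also the only place requiring genuine care. If $a_2=0$, the first relation gives $a_3a_1=0$; were $a_3$ also zero, the $x_1x_2$-coefficient $-4a_1a_2+36a_0a_3$ would vanish, contradicting $\alpha\neq 0$, so $a_3\neq 0$ and hence $a_1=0$, giving the diagonal conclusion. The subcase $a_1=0$ is identical after the symmetry $x_1\leftrightarrow x_2$. In the remaining case $a_1,a_2\neq 0$, I would solve the two relations for $a_0=a_1^2/(3a_2)$ and $a_3=a_2^2/(3a_1)$, substitute into the $x_1x_2$-coefficient, and check that it simplifies to $-4a_1a_2+4a_1a_2=0$, again contradicting $\alpha\neq 0$; so this case is vacuous. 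The computations are elementary and present no real obstacle, so the decisive point is simply the bookkeeping of the case analysis: one must verify that every degenerate branch (some $a_i$ vanishing) is either forced into the diagonal form or excluded by the nonvanishing of $\alpha$, and that the symmetry reduction from $a_1=0$ to $a_2=0$ is legitimate.
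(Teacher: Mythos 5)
Your proposal is correct and follows essentially the same route as the paper's proof: negativity for $n\geq 3$ via Theorem~\ref{negativeresult} (with $q=3$), then the explicit Hessian of the general binary cubic, the same vanishing relations $a_2^2=3a_3a_1$, $a_1^2=3a_2a_0$, the same case split with the $x_1\leftrightarrow x_2$ symmetry reduction, and the same contradiction $-4a_1a_2+4a_1a_2=0$ in the case $a_1,a_2\neq 0$. Your write-up merely spells out details the paper leaves implicit (the divisibility check for $q$ and the verification that $a_3\neq 0$ when $a_2=0$).
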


For $n=d=3$, the following statement is based on a well-known classification and we record it from our perspective. 

\begin{proposition}\label{d=3}
For $n=d=3$, the only cubic homogenous polynomials satisfying the assumptions in Question \ref{quest} are $f = a_1x_1^3 + a_2x_2^3 + a_3x_3^3$ and $f = a_{123} x_1x_2x_3$ (with $a_1a_2a_3\neq 0$ and $a_{123}\neq 0$ resp.). 
\begin{proof}
Let $f\in \mathbb C[x_1,x_2,x_3]$ be a cubic form. The Hessian map
$H: \mathbb C[x_1,x_2,x_3]_{3}\rightarrow \mathbb C[x_1,x_2,x_3]_{3}$,
$H: g\mapsto H(g)$ is an $\mathrm{SL}(\mathbb C^3)$-coinvariant \cite[\S 5.1]{Dolg}. Thus, it is enough to check 
the $\mathrm{SL}(\mathbb C^3)$-orbits of ternary cubic homogenous polynomials, which are well-known. Among these, we see that the only cubic homogenous polynomials whose Hessian splits into three linearly independent linear forms are the Fermat polynomials 
$g = \ell_1^3 + \ell_2^3 + \ell_3^3$ and the triangles $g=\ell_1\ell_2\ell_3$. 
Since by assumption $H(g) = \alpha x_1x_2x_3\neq 0$, by Lemma \ref{autofhess} proved below, we see that 
$\ell_i = x_i$ (up to relabeling and scaling). This shows the statement. 
\end{proof}
\end{proposition}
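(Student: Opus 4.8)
The plan is to exploit the fact, recorded in the proof of Proposition \ref{d=3}, that the Hessian map on ternary cubics is an $\mathrm{SL}(\mathbb C^3)$-coinvariant and that the $\mathrm{SL}(\mathbb C^3)$-orbits of plane cubics are completely classified. First I would recall the classical orbit list for ternary cubics: the smooth cubics (one-parameter family, in Hesse normal form), and the singular ones (nodal, cuspidal, triangle $\ell_1\ell_2\ell_3$, conic-plus-tangent-line, conic-plus-secant, line-plus-double-line, triple line, etc.). Since the conclusion we want is that the Hessian is a product of three \emph{distinct} linearly independent linear forms, I would run through this finite list and compute (or cite) the Hessian of a representative of each orbit, discarding any orbit whose Hessian is either identically zero or fails to factor as three independent linear forms. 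The expectation — which the statement asserts — is that only two orbits survive: the smooth cubics whose Hessian is again a nonzero cubic splitting into three independent lines, and the triangle.

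The key steps, in order, are: (1) reduce to orbit representatives via the coinvariance of $H$, so that it suffices to understand $H$ on one form per orbit; (2) eliminate all orbits with degenerate Hessian. For the triple line and the line-plus-double-line the Hessian vanishes identically; for the cuspidal cubic and the conic-plus-tangent the Hessian is a perfect power or otherwise fails to split into three \emph{distinct} independent factors (compare Remark \ref{exampleothermons}(ii), where $H$ of a conic-times-tangent is $-2^nx_1^n$, a pure power); for the nodal cubic one checks the Hessian factor is not squarefree into independent lines. (3) Among the survivors, identify that the smooth representative reduces, after the $\mathrm{SL}$-action, to the Fermat form $\ell_1^3+\ell_2^3+\ell_3^3$ — whose Hessian is proportional to $\ell_1\ell_2\ell_3$ — and that the triangle $\ell_1\ell_2\ell_3$ has Hessian proportional to $\ell_1\ell_2\ell_3$ as well, both manifestly three independent lines. (4) Finally impose the hypothesis of Question \ref{quest}, namely $H(g)=\alpha x_1x_2x_3$ with $\alpha\neq 0$, and invoke Lemma \ref{autofhess} to pin down $\ell_i=x_i$ up to relabeling and scaling, thereby forcing $f=a_1x_1^3+a_2x_2^3+a_3x_3^3$ in the smooth case and $f=a_{123}x_1x_2x_3$ in the triangle case, with the stated nonvanishing conditions on the coefficients.

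The main obstacle I anticipate is step (2): it is conceptually routine but requires a correct and exhaustive enumeration of the singular-orbit representatives together with an accurate Hessian computation for each, since missing an orbit or miscomputing one Hessian would invalidate the ``only'' in the statement. In particular one must be careful to distinguish orbits whose Hessian \emph{does} split into three independent lines but only after noticing the splitting is not into \emph{distinct} lines (a pure power, as in Remark \ref{exampleothermons}(ii), is a genuine hazard here). Rather than recompute all of these by hand, I would lean on the classical classification as cited in \cite[\S 5.1]{Dolg} and merely record the outcome for each orbit, so that the real content of the argument is the translation from ``Hessian splits into three independent linear forms'' back to $f$ via Lemma \ref{autofhess}. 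The remaining step (4) is then immediate once the orbit analysis is in place.
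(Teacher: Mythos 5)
Your proposal follows essentially the same route as the paper's own proof: reduce via the $\mathrm{SL}(\mathbb C^3)$-coinvariance of $H$ to the classical orbit classification of ternary cubics, observe that the only orbits whose Hessians split into three linearly independent linear forms are the Fermat polynomials $\ell_1^3+\ell_2^3+\ell_3^3$ and the triangles $\ell_1\ell_2\ell_3$, and then invoke Lemma \ref{autofhess} to force $\ell_i = x_i$ up to relabeling and scaling. The only difference is one of detail: you spell out the orbit-by-orbit Hessian check (which the paper compresses into ``we see that''), including the point that the smooth cubics form a one-parameter family in Hesse normal form, so that the check there must be done parametrically to isolate the Fermat-equivalent members; this elaboration is correct and does not change the argument.
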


\subsection{Singular binary homogeneous polynomials}

We deal with the case of singular binary homogeneous polynomials, providing the only ones whose Hessians have the desired form.

\begin{remark}\label{factsonhessians}
For $n\geq 2$, let $f\in \CC[x_1,\ldots, x_n]_d$ and let $X_f\subset \PP^{n-1}$ be the reduced projective hypersurface defined by $f$. Then the reduced singular locus $\mathrm{Sing}(X_f)$ sits inside $X_{H(f)}$, where $H(f) = \det(\mathrm{Hess}(f))$
\cite[Proposition 1.1.19]{Dolgachev}. 
\end{remark}

\begin{theorem}\label{binaryodd}
For $n=2$ and $d\geq 3$ odd, there are no singular homogeneous polynomials satisfying the assumptions in Question \ref{quest}. 
\begin{proof}
Let $d = 2p+1$. Let $f = f(x_1,x_2) = \sum_{i=0}^d a_i x_1^i x_2^{d-i}\in \CC[x_1,x_2]_d$ be such that $H(f) = \alpha (x_1x_2)^{d-2}\neq 0$. Up to the automorphism of $\CC[x_1,x_2]$ swapping $x_1$ and $x_2$ and by Remark \ref{factsonhessians}, we may assume that $f = \sum_{i\geq 2}^d a_i x_1^i x_2^{d-i}$, i.e. $a_0 = a_1 = 0$. The coefficient of the monomial $x_1^{j}x_2^{2d-j-4}$ in $H(f)$ is of the form
\[
\sum_{k+\ell=j+2} \gamma_{(k,\ell)} a_k a_{\ell}. 
\]
For all $2\leq s\leq p$ and $j=2(s-1)$, the coefficient $\gamma_{(s,s)}$ in front of $a_{s}^2$ appearing in the monomial
$x_1^{2s-2}x_2^{2d-2s-2}$ in $H(f)$ is 
\[
\gamma_{(s,s)} = -s(d-1)(d-s)\neq 0. 
\]

Now, suppose that for some $1\leq j'<p$ we have that $a_{j''}=0$ for all $0\leq j''\leq j'$. We claim that $a_{j'+1}=0$. 

To show the claim, look at the coefficient of the monomial $x_1^{2j'}x_2^{2d-2j'-4}$ given by
\begin{equation}\label{coeff2j'+2}
\sum_{k+\ell=2j'+2} \gamma_{(k,\ell)} a_k a_{\ell}. 
\end{equation}
Since $k+\ell = 2(j'+1)$, if $\ell \geq j'+2$ then $k\leq j'$. By assumption, in this case, we have 
$a_ka_{\ell}=0$ and so this summand does not contribute to \eqref{coeff2j'+2}. 
Then the only summand left in \eqref{coeff2j'+2} is $\gamma_{(j'+1,j'+1)} a_{j'+1}^2$. 
Since $j'<p$, $2j'\leq 2(p-1)= 2p-2 = d-3$. Thus the monomial $x_1^{2j'}x_2^{2d-2j'-2}$ does not appear in $H(f)$, by assumption. Therefore $\gamma_{(j'+1,j'+1)} a_{j'+1}^2 = 0$ and since $\gamma_{(j'+1,j'+1)}\neq 0$, we find $a_{j'+1}=0$. 

Since $a_0=a_1=0$, then the argument above proves that $a_j=0$ for all $j\leq p$. Now, look
at the coefficient of $x^{d-2}y^{d-2}$. This is of the form 
\[
\sum_{k+\ell=d} \gamma_{(k,\ell)} a_k a_{\ell}. 
\]
If $\ell\geq p+1$, then $k\leq p$. So $a_{k}a_{\ell} = 0$ by what we have shown. Hence the coefficient above vanishes. This is in contradiction with the assumption on $H(f)$. 
\end{proof}
\end{theorem}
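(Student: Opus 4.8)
The plan is to convert the single Hessian condition into a large system of quadratic equations in the coefficients of $f$ and then extract a one-index-at-a-time vanishing cascade. First I would normalize the location of the singularity. Writing $f = \sum_{i=0}^d a_i x_1^i x_2^{d-i}$, the hypothesis $H(f) = \alpha(x_1x_2)^{d-2}$ says that the Hessian vanishes exactly at the two coordinate points $[1:0]$ and $[0:1]$. Since $f$ is singular, its singular locus is nonempty and, by Remark \ref{factsonhessians}, contained in $V(H(f)) = \{[1:0],[0:1]\}$. Using the involution $x_1 \leftrightarrow x_2$ I may place a singular point at $[0:1]$; a root there of multiplicity at least two is precisely the condition $a_0 = a_1 = 0$, which is the normalization I will carry through the argument. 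I would note at this point that no obvious invariant-theoretic or orbit classification is available for general $d$ (unlike the ternary cubic situation), which is what forces the combinatorial route.

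Next I would record the shape of the Hessian coefficients. Expanding $H(f) = f_{x_1x_1}f_{x_2x_2} - f_{x_1x_2}^2$ and collecting terms, the coefficient of the monomial $x_1^{j}x_2^{2d-4-j}$ is a quadratic form $\sum_{k+\ell = j+2}\gamma_{(k,\ell)}a_k a_\ell$, where every index pair contributing to a fixed monomial satisfies $k + \ell = j+2$. The one computation I actually need is the diagonal entry: a direct calculation gives $\gamma_{(s,s)} = -s(d-1)(d-s)$, which is nonzero for $1 \le s \le d-1$. The hypothesis on $H(f)$ then forces every such coefficient to vanish except the one attached to the central monomial $x_1^{d-2}x_2^{d-2}$, and this is the engine of the induction.

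I would then prove by induction on $j'$ that $a_j = 0$ for all $j \le p$, where $d = 2p+1$. The base case is the normalization $a_0 = a_1 = 0$. For the inductive step, assuming $a_{j''}=0$ for all $j'' \le j'$ with $j' < p$, I examine the coefficient of $x_1^{2j'}x_2^{2d-4-2j'}$: here $k + \ell = 2(j'+1)$, so in every product $a_k a_\ell$ one of the two indices is at most $j'$ unless $k = \ell = j'+1$. All terms but the diagonal one die by the inductive hypothesis, leaving $\gamma_{(j'+1,j'+1)}a_{j'+1}^2$. Because $j' \le p-1$, the exponent $2j'$ is strictly smaller than $d-2$, so this monomial is not the central one and its coefficient must vanish; since $\gamma_{(j'+1,j'+1)} \neq 0$, I conclude $a_{j'+1}=0$.

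Finally I would extract the contradiction from the central monomial. Its coefficient is $\sum_{k+\ell=d}\gamma_{(k,\ell)}a_k a_\ell$ and must equal $\alpha \neq 0$. But $d$ is odd, so $k + \ell = d$ forces $k \neq \ell$, hence $\min(k,\ell) \le p$; by the induction every such product contains a vanishing factor, so the coefficient is $0$, a contradiction. The main obstacle, and the point where oddness is essential, is exactly this last parity step: for odd $d$ there is no surviving diagonal term $\gamma_{(d/2,d/2)}a_{d/2}^2$ at the central monomial, so the cascade closes cleanly, whereas for even $d$ such a term blocks the argument and demands the genuinely different treatment of Theorem \ref{binaryeven}. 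The secondary technical point to verify with care is the nonvanishing of the diagonal coefficients $\gamma_{(s,s)}$ throughout the range used, since the whole induction rests on being able to divide them out.
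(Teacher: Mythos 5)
Your proposal is correct and follows essentially the same route as the paper's proof: the same normalization $a_0=a_1=0$ via Remark \ref{factsonhessians} and the swap involution, the same diagonal coefficients $\gamma_{(s,s)}=-s(d-1)(d-s)$, the same inductive cascade killing $a_j$ for $j\le p$, and the same parity contradiction at the central monomial $x_1^{d-2}x_2^{d-2}$. Your added remark that oddness of $d$ is what prevents a surviving diagonal term $\gamma_{(d/2,d/2)}a_{d/2}^2$ correctly identifies why the even case requires the separate treatment of Theorem \ref{binaryeven}.
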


\begin{theorem}\label{binaryeven}
For $n=2$ and $d\geq 4$ even, the only singular binary homogeneous polynomials satisfying the assumptions in Question \ref{quest} are of the form $f = a_{d/2}x_1^{d/2}x_2^{d/2}$. 
\begin{proof}
Let $d = 2p$. Let $f = f(x_1,x_2) = \sum_{i=0}^d a_i x_1^i x_2^{d-i}\in \CC[x_1,x_2]_d$ be such that $H(f) = \alpha (x_1x_2)^{d-2}\neq 0$. As in the proof of Theorem \ref{binaryodd}, we may assume that $f = \sum_{i\geq 2}^d a_i x_1^i x_2^{d-i}$, i.e. $a_0 = a_1 = 0$. Moreover, again looking at $\gamma_{j,j}$ for $j\leq p-1$, we find that $a_{j}=0$ for all $j\leq p-1$ and $a_{p}= a_{d/2}\neq 0$. 

Let $1\leq t\leq p$ and consider the coefficient $a_{p+t}$ of the monomial $x_1^{p+t}x_2^{d-p-t}$ in $f$. 
Look at the coefficient of the monomial $x_1^{2p+t-2}x_2^{2p-t-2}$ in $H(f)$. This is of the form
\begin{equation}\label{coeff2p+t-2}
\sum_{k+\ell=2p+t} \gamma_{(k,\ell)} a_k a_{\ell}
\end{equation}
and, since $2p+t-2>2p-2=d-2$, it must vanish by the assumption on $H(f)$. 
Since $k+\ell=2p+t$, if $\ell\geq p+t+1$ then $k\leq p-1$. In this case, $a_k a_{\ell}=0$ by what we have shown above. 
So the only left summand in \eqref{coeff2p+t-2} is $\gamma_{(p,p+t)}a_pa_{p+t}$, where $a_p\neq 0$ but
$\gamma_{(p,p+t)}a_pa_{p+t}=0$. 

If $\gamma_{(p,p+t)}\neq 0$, for any given $p$ and every $1\leq t\leq p$, then $a_{p+t}=0$ for all $1\leq t\leq p$ 
and we would be done. 

The coefficient $\gamma_{(p,p+t)}$ is given by
\begin{equation}\label{coefft^2}
\gamma_{(p,p+t)} = 2p(2p-1)(t^2-p). 
\end{equation}
In other words, given $p\geq 2$, $\gamma_{(p,p+t)} = 0$ if and only if $t=\sqrt{p}$ (i.e. when $p$ is a perfect 
square and $t$ is its positive square root). 

From now on, we shall then assume that $p$ is a perfect square, otherwise we are done. Since $\gamma_{(p,p+t')}\neq 0$ for all $1\leq t'\leq \sqrt{p}-1$, then the coefficient $\eqref{coeff2p+t-2}$ for the monomial $x_1^{2p+t'-2}x_2^{2p-t'-2}$ is 
\[
\gamma_{(p,p+t')}a_pa_{p+t'}=0. 
\]
Hence $a_{p+t'}=0$ for all $1\leq t'\leq \sqrt{p}-1$. 

{\bf Claim 1}:
Let $1\leq t\leq p$ be an integer such that $m\sqrt{p}  < t < (m+1)\sqrt{p}$, for some integer $0\leq m\leq \sqrt{p}-1$. 
Suppose that $a_{p+t'}=0$ for all $t'$ such that $m'\sqrt{p} < t'  < (m'+1)\sqrt{p}$ for all $0\leq m' < m$. 
Then $a_{p+t}=0$. 
\begin{proof}
Write $t = m\sqrt{p} + n$, where $1\leq n < \sqrt{p}$. 
We look at the coefficient of the monomial $x_1^{2p+t-2}x_2^{2p-t-2} = x_1^{2p+m\sqrt{p}+n-2}x_2^{2p-m\sqrt{p}-n-2}$. 
This is of the form 
\begin{equation}\label{coeff2p+msqrtp+n-2}
\sum_{k+\ell=2p+m\sqrt{p}+n} \gamma_{(k,\ell)} a_k a_{\ell}.
\end{equation}
Note that $2p+m\sqrt{p}+n-2 > 2p-2=d-2$ and so the coefficient \eqref{coeff2p+msqrtp+n-2} must vanish. 
If $\ell$ or $k$ are of the form $p+q\sqrt{p}+q'$ with $0\leq q\leq m-1$ and $1\leq q'< \sqrt{p}$, then 
the product $a_k a_{\ell}=0$, by assumption. 
If $\ell = p + z\sqrt{p}$ with $1\leq z\leq m-1$, then $k= p + z'\sqrt{p}+n$ with $1\leq z'\leq m-1$. Thus 
the product $a_k a_{\ell}=0$, by assumption. 
In conclusion, the only left summand in the coefficient \eqref{coeff2p+msqrtp+n-2} is 
\[
\gamma_{(p,p+m\sqrt{p}+n)}a_{p} a_{p+m\sqrt{p}+n} = 0.
\]
Since $a_p\neq 0$ and $\gamma_{(p,p+m\sqrt{p}+n)}\neq 0$ (because $m\sqrt{p}+n\neq \sqrt{p}$ for all choices of $m,n$ defined above), we find $a_{p+m\sqrt{p}+n} = a_{p+t}=0$. 
\end{proof}

Note that we have already proven that $a_{p+t'}=0$ for all $1\leq t'\leq \sqrt{p}-1$. This is the case $m=1$ in
the assumption of {\bf Claim 1}. Then applying the conclusion of {\bf Claim 1} iteratively, we find that 
all the coefficients of $f$ vanish, {\it unless} they are the following $(\sqrt{p}+1)$ coefficients: $a_p, a_{p+\sqrt{p}}, a_{p+2\sqrt{p}},\ldots, a_{2p}$. To conclude, we have to show that, except $a_p$, they all must vanish. 

To this aim, for $0\leq s,r \leq \sqrt{p}$ and $(s,r)\neq (0,0)$, we look at the coefficient of $a_{p+s\sqrt{p}}a_{p+r\sqrt{p}}$ 
in the coefficient of the monomial $x_1^{2p+(s+r)\sqrt{p}-2}x_2^{2p-(s+r)\sqrt{p}-2}$ in $H(f)$. This 
has the form
\[
\gamma_{(p+s\sqrt{p}, p+r\sqrt{p})} = 2p(2p-1)(ps^2-2srp + pr^2 + sr-p). 
\]
We study the vanishing (and the sign) of $F(p,s,r) = ps^2-2srp + pr^2 + sr-p$. Write
\[
F(p,s,r) = p(s-r)^2 + sr -p. 
\]
If $s=r$, then $s=r < \sqrt{p}$, because the exponent of $x_1$ must satisfy $2p+(s+r)\sqrt{p}-2\leq 4p-4=2d-4$. 
Thus, if $s=r$ then $F(p,s,s)<0$. 

If $s,r \geq 1$ and $s\neq r$, then $p(s-r)^2-p\geq 0$ and the product $sr\geq 1$. So $F(p,s,r)>0$. 
If $s=0$ (or $r=0$), then $F(p,0,r) = pr^2 - p = 0$ if and only $r=1$ (or $s=1$). 
Otherwise, if $s=0$, then $F(p,0,r)>0$ for any $r\geq 2$. In conclusion, $\gamma_{p+s\sqrt{p}, p+r\sqrt{p}}\neq 0$ unless $s=0$ and $r=1$ (or the way around). This is 
exactly the case of \eqref{coefft^2}. \\

{\bf Claim 2:}
Suppose that for all $0\leq i\leq u -1<\sqrt{p}-1$ we know that $a_{p+(\sqrt{p}-i)\sqrt{p}}=a_{2p-i\sqrt{p}}=0$.  
Then $a_{p+(\sqrt{p}-u)\sqrt{p}}=0$. 
\begin{proof}
The remaining coefficients are $a_{p+\sqrt{p}}, a_{p+2\sqrt{p}},\ldots, a_{2p}$ (besides $a_p$) and all the others vanish.

Note that $2p-i\sqrt{p} > p+(\sqrt{p}-u)\sqrt{p}$. In other words, we are assuming 
that all the $a_h$ with $h> p+(\sqrt{p}-u)\sqrt{p}$ vanish. 

We look at the coefficient of the monomial $x_1^{4p-2u\sqrt{p}-2}x_2^{2u\sqrt{p}-2}$ in $H(f)$,
which must vanish by assumption. This coefficient has the form 
\begin{equation}\label{coeff4p-2u}
\sum_{k+\ell = 4p-2u\sqrt{p}} \gamma_{(k,\ell)} a_{k}a_{\ell} = 0. 
\end{equation}
Write $k=p+h'\sqrt{p}$ and $\ell = p + h''\sqrt{p}$. Then 
$k+\ell = 4p-2u\sqrt{p} = 2p + (h'+h'')\sqrt{p}$. Therefore $h'+h'' = 2(\sqrt{p}-u)$. If
$h' \geq \sqrt{p}-u +1$, then $k = p + h'\sqrt{p} > p+(\sqrt{p}-u)\sqrt{p}$ and hence $a_{k}=0$, by assumption. 

Thus the coefficient \eqref{coeff4p-2u} becomes
\[
\gamma_{(p+(\sqrt{p}-u)\sqrt{p}, p+(\sqrt{p}-u)\sqrt{p})} a^2_{p+(\sqrt{p}-u)\sqrt{p}}=0.
\]
Since $\gamma_{(p+(\sqrt{p}-u)\sqrt{p}, p+(\sqrt{p}-u)\sqrt{p})}\neq 0$ by the part before this claim, 
we find $a_{p+(\sqrt{p}-u)\sqrt{p}}=0$. 
\end{proof}

Now, look at the coefficient of the monomial $x_1^{4p-\sqrt{p}-2}x_2^{\sqrt{p}-2}$. The only coefficient
appearing here is the product $\gamma_{(2p, p+(\sqrt{p}-1)\sqrt{p})}a_{2p} a_{p+(\sqrt{p}-1)\sqrt{p}}$, which must vanish.
Since $\gamma_{(2p, p+(\sqrt{p}-1)\sqrt{p})}\neq 0$ by the part of the proof before {\bf Claim 2}, either $a_{2p}$ or $a_{p+(\sqrt{p}-1)\sqrt{p}}$ vanishes. 

If $a_{2p} = 0$, applying {\bf Claim 2}, we are done. If $a_{p+(\sqrt{p}-1)\sqrt{p}}=0$, then applying
{\bf Claim 2}, all coefficients $a_{p+\sqrt{p}}, a_{p+2\sqrt{p}},\ldots, a_{2p}$ are zero, except
possibly $a_{2p}$. 

However, assuming $a_{2p}\neq 0$ (with all the other coefficients being zero, except $a_p$) leads 
to an immediate contradiction with the assumption on $H(f)$. In conclusion, $f=a_{d/2}x_1^{d/2}x_2^{d/2}$ and this establishes the statement.
\end{proof}
\end{theorem}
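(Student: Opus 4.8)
The plan is to expand $f = \sum_{i=0}^d a_i x_1^i x_2^{d-i}$ with $d = 2p$ and read off the vanishing of $H(f)$ one coefficient at a time, exactly in the spirit of Theorem~\ref{binaryodd}. Since $H(f) = f_{x_1x_1}f_{x_2x_2} - f_{x_1x_2}^2$ is quadratic in $f$ and preserves total degree, the coefficient of $x_1^j x_2^{2d-j-4}$ in $H(f)$ is a fixed quadratic form $\sum_{k+\ell = j+2}\gamma_{(k,\ell)}a_k a_\ell$ in the $a_i$, where the universal constants $\gamma_{(k,\ell)}$ come from differentiating the monomials twice. First I would use that $f$ is singular together with Remark~\ref{factsonhessians}: the singular points of $X_f$ lie in $X_{H(f)} = \{x_1 x_2 = 0\}$, so after swapping $x_1,x_2$ we may assume $x_1^2 \mid f$, i.e. $a_0 = a_1 = 0$.

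Next I would run the same diagonal induction as in the odd case. The diagonal coefficient is $\gamma_{(j,j)} = -j(d-1)(d-j)$, which is nonzero for $1 \le j \le d-1$. Looking at the monomial $x_1^{2j'}x_2^{2d-2j'-4}$ with $j' < p-1$ (which does not occur in $H(f)$ since $2j' < d-2$), every surviving term $a_k a_\ell$ with $k+\ell = 2j'+2$ has a factor $a_k$ with $k \le j'$, so only $\gamma_{(j'+1,j'+1)}a_{j'+1}^2$ remains and forces $a_{j'+1} = 0$. Starting from $a_0 = a_1 = 0$ this gives $a_j = 0$ for all $j \le p-1$, while the coefficient of $x_1^{d-2}x_2^{d-2}$ equals $\gamma_{(p,p)}a_p^2$, so $a_p = a_{d/2} \neq 0$.

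The heart of the matter is to kill the coefficients $a_{p+t}$, $1 \le t \le p$. Here I would compute the off-diagonal coefficient $\gamma_{(p,p+t)} = 2p(2p-1)(t^2 - p)$ and argue by induction on $t$: the monomial $x_1^{2p+t-2}x_2^{2p-t-2}$ does not appear in $H(f)$ (as $2p+t-2 > d-2$), and once $a_{p+1} = \cdots = a_{p+t-1} = 0$ the only surviving term in its coefficient is $\gamma_{(p,p+t)}a_p a_{p+t}$; since $a_p \neq 0$, we get $a_{p+t}=0$ \emph{provided} $\gamma_{(p,p+t)} \neq 0$. This settles everything unless $p$ is a perfect square, the unique case where $\gamma_{(p,p+\sqrt p)} = 0$.

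The main obstacle is therefore the perfect-square case, where the clean induction stalls at $t = \sqrt p$. To get around it I would work with the general off-diagonal coefficient
\[
\gamma_{(p+s\sqrt p,\, p+r\sqrt p)} = 2p(2p-1)\bigl(p(s-r)^2 + sr - p\bigr),
\]
and carry out the elementary sign analysis of $F(p,s,r) = p(s-r)^2 + sr - p$, which vanishes only for $\{s,r\} = \{0,1\}$ (using the exponent bound $s,r < \sqrt p$ when $s=r$). With this in hand the argument splits into two phases. First, a block induction (indexed by $m$ with $m\sqrt p < t < (m+1)\sqrt p$) shows that every $a_{p+t}$ with $t$ not a multiple of $\sqrt p$ vanishes, because for such $t$ one still has $\gamma_{(p,p+t)} \neq 0$; this leaves only the $\sqrt p + 1$ coefficients $a_p, a_{p+\sqrt p}, \ldots, a_{2p}$. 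Second, a downward induction kills $a_{2p}, a_{2p-\sqrt p}, \ldots$ from the top, isolating a diagonal term $\gamma_{(p+(\sqrt p - u)\sqrt p,\,\cdots)}a_{p+(\sqrt p - u)\sqrt p}^2$ that is nonzero since $s=r$ gives $F<0$. The single remaining ambiguity --- whether $a_{2p}$ or $a_{p+(\sqrt p -1)\sqrt p}$ vanishes first --- I would resolve by noting that $f = a_p(x_1x_2)^p + a_{2p}x_1^{2p}$ with $a_{2p}\neq 0$ produces the spurious monomial $x_1^{3p-2}x_2^{p-2}$ in $H(f)$, contradicting the assumption. Hence all $a_{p+t} = 0$ and $f = a_{d/2}x_1^{d/2}x_2^{d/2}$.
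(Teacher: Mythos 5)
Your proposal is correct and follows essentially the same route as the paper's own proof: the same reduction to $a_0=a_1=0$, the same diagonal induction giving $a_j=0$ for $j\le p-1$ and $a_p\neq 0$, the same coefficient $\gamma_{(p,p+t)}=2p(2p-1)(t^2-p)$ isolating the perfect-square case, the same sign analysis of $F(p,s,r)=p(s-r)^2+sr-p$, the same two-phase (block and downward) inductions, and the same final dichotomy between $a_{2p}$ and $a_{p+(\sqrt{p}-1)\sqrt{p}}$. Your only departure is cosmetic: you make explicit the ``immediate contradiction'' at the end by exhibiting the spurious monomial $x_1^{3p-2}x_2^{p-2}$ in $H\bigl(a_p(x_1x_2)^p+a_{2p}x_1^{2p}\bigr)$, which the paper leaves to the reader.
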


\subsection{Stabilizers and Jacobian rings}
\begin{lemma}\label{autofhess}
One has the following descriptions for the stabilizers: 
\begin{enumerate}
\item[(i)] $\mathrm{Stab}_{\mathrm{GL}(\mathbb C^n)}((x_1\cdots x_n)^{d-2}) \cong (\mathbb C^{*})^{n-1}\rtimes \mathfrak S_n$. 
\item[(ii)]  Let $h = (x_1\cdots x_n)^{d-2}$ and let $\mathcal V(h)$ be the corresponding projective hypersurface in the projective space $\mathbb P^{n-1}_{\CC}$ equipped with the natural action of $\mathrm{PGL}(\mathbb C^n)$. Then $\mathrm{Stab}_{\mathrm{PGL}(\mathbb C^n)}(\mathcal V(h)) \cong (\mathbb C^{*})^{n}\rtimes \mathfrak S_n/\mathbb C^{*}$. 
\end{enumerate}
\begin{proof}
(i). Let $A\in \mathrm{Stab}_{\mathrm{GL}(\mathbb C^n)}((x_1\cdots x_n)^{d-2})$ be an element 
of the stabilizer in $\mathrm{GL}(\mathbb C^n)$ of the indicated polynomial. Since a polynomial ring is a unique factorization domain and the $x_i$'s are irreducible in this ring, for each $i$ we find $A\circ x_i = \gamma_i x_{\sigma(i)}$, for some $\sigma\in \mathfrak S_n$ and $\gamma_i\in \mathbb C^{*}$ with $\prod_{i=1}^n \gamma_i^{d-2} = 1$. The last condition provides an isomorphism with the $(n-1)$-dimensional algebraic torus. 
Statement (ii) is proven similarly (here we have to further quotient out by the $\mathbb C^*$ given by the scalar multiples of the identity). 
\end{proof}
\end{lemma}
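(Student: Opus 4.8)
The plan is to reduce both parts to a single unique-factorization observation and then translate the resulting combinatorics into the stated group structures. For (i), I would start from $A\in\mathrm{Stab}_{\mathrm{GL}(\CC^n)}(h)$ with $h=(x_1\cdots x_n)^{d-2}$ and set $\ell_i:=A\circ x_i$, which are linearly independent linear forms since $A$ is invertible. The stabilizing condition is the identity $\prod_{i=1}^n \ell_i^{d-2}=\prod_{i=1}^n x_i^{d-2}$ in $\CC[x_1,\dots,x_n]$. Because this ring is a UFD in which each $x_i$ is prime and each $\ell_i$ is irreducible, uniqueness of factorization forces the multiset of associate classes $\{[\ell_i]\}$ to equal $\{[x_j]\}$, so there are $\sigma\in\mathfrak{S}_n$ and $\gamma_i\in\CC^*$ with $\ell_i=\gamma_i x_{\sigma(i)}$; substituting back and comparing the scalar in front of $\prod_j x_j^{d-2}$ yields the single constraint $\prod_{i=1}^n\gamma_i^{d-2}=1$.

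Next I would assemble the group. The assignment $A\mapsto\sigma$ is a homomorphism $\mathrm{Stab}_{\mathrm{GL}(\CC^n)}(h)\rightarrow\mathfrak{S}_n$, surjective and split by the permutation matrices, with kernel the diagonal part $T=\{(\gamma_i)\in(\CC^*)^n:\prod_i\gamma_i^{d-2}=1\}$; since $\mathfrak{S}_n$ acts on $T$ by permuting coordinates, this gives $\mathrm{Stab}_{\mathrm{GL}(\CC^n)}(h)\cong T\rtimes\mathfrak{S}_n$. To recognize $T$, I would use the homomorphism $(\CC^*)^n\rightarrow\CC^*$, $(\gamma_i)\mapsto\prod_i\gamma_i$, which exhibits $T$ as the preimage of the group $\mu_{d-2}$ of $(d-2)$-th roots of unity, hence a closed subgroup of dimension $n-1$ whose identity component is the subtorus $\{\prod_i\gamma_i=1\}\cong(\CC^*)^{n-1}$ named in the statement.

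For (ii) I would rerun the same argument for the weaker requirement that $A$ only preserve the reduced hypersurface $\mathcal V(h)=\bigcup_i\{x_i=0\}$, equivalently that $A\circ h$ be a nonzero scalar multiple of $h$. This again forces $A\circ x_i=\gamma_i x_{\sigma(i)}$, but now drops the normalization $\prod_i\gamma_i^{d-2}=1$, so that the $\mathrm{GL}$-stabilizer of the variety is the full monomial group $(\CC^*)^n\rtimes\mathfrak{S}_n$. Passing to $\mathrm{PGL}(\CC^n)$ then quotients out the scalar matrices $\CC^*\subset(\CC^*)^n$, giving the stated $((\CC^*)^n\rtimes\mathfrak{S}_n)/\CC^*$.

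I expect the only delicate point to be the precise identification of the diagonal part $T$ as an abstract group: the UFD step is clean and the semidirect-product bookkeeping is formal, but the scalar constraint $\prod_i\gamma_i^{d-2}=1$ cuts out $(\CC^*)^{n-1}$ only up to the finite component group $\pi_0(T)\cong\mu_{d-2}$. I would settle this either via the splitting $\zeta\mapsto(\zeta,1,\dots,1)$ of the fibration $T\rightarrow\mu_{d-2}$, or via the character-lattice computation $\mathbb{Z}^n/\langle(d-2,\dots,d-2)\rangle\cong\mathbb{Z}^{n-1}\oplus\mathbb{Z}/(d-2)$, reading the statement's $(\CC^*)^{n-1}$ as the connected stabilizer.
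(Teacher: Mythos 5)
Your proof is correct and takes essentially the same route as the paper: the UFD argument forcing $A\circ x_i=\gamma_i x_{\sigma(i)}$ with $\prod_i\gamma_i^{d-2}=1$ is the heart of both, and the semidirect-product bookkeeping you make explicit is simply left implicit there. The delicate point you flag is genuine, and the paper glosses over it: for $d\geq 4$ the diagonal part $T=\{(\gamma_i)\in(\CC^{*})^n:\prod_i\gamma_i^{d-2}=1\}$ is not a torus but $(\CC^{*})^{n-1}\times\mu_{d-2}$ (it is not divisible, hence not abstractly isomorphic to $(\CC^{*})^{n-1}$; likewise the centers of $T\rtimes\mathfrak{S}_n$ and of $(\CC^{*})^{n-1}\rtimes\mathfrak{S}_n$ are $\mu_{n(d-2)}$ and $\mu_n$ respectively), so part (i) as literally stated is exact only for $d=3$ and otherwise holds up to this finite component group, exactly as your identity-component reading suggests. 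The imprecision is harmless for the paper's applications, since Proposition \ref{d=3} concerns $d=3$ and Proposition \ref{propfermat} invokes only part (ii), where no normalization of the $\gamma_i$ is imposed.
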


\begin{lemma}\label{inclusionStab+lowerbound}
Let $f\in \CC[x_1,\dots ,x_n]_d$ and $h = H(f)\in \CC[x_1,\ldots, x_n]_{n(d-2)}$ be its Hessian.
\begin{enumerate}
\item[(i)] Their stabilizers in $\mathrm{SL}(\CC^n)$ satisfy $\mathrm{Stab}_{\mathrm{SL}(\CC^n)}(f)\subset \mathrm{Stab}_{\mathrm{SL}(\CC^n)}(h)$. 
\item[(ii)] Suppose $h= \alpha (x_1\cdots x_n)^{d-2}\neq 0$ and let $s=\dim \mathrm{Stab}_{\mathrm{SL}(\CC^n)}(f)$. Then every irreducible component of the fiber $H^{-1}(h)$ containing $f$ has dimension $\geq n-1-s$. 
\end{enumerate}
\begin{proof}
\noindent (i). Since the Hessian map is an $\mathrm{SL}(\CC^n)$-coinvariant \cite[\S 5.1]{Dolg}, for any $A\in \mathrm{Stab}_{\mathrm{SL}(\CC^n)}(f)$, we have $h = H(A\circ f) = A\circ h$, which proves the containment. \\
\noindent (ii). Let $X\subset H^{-1}(h)$ be an irreducible component containing $f$ and let $G$ be the connected component of the identity in $\mathrm{Stab}_{\mathrm{SL}(\CC^n)}(h)$, which has dimension $\dim(G) =n-1$. Note that the orbit $G\cdot f\subset X$ and so $\dim X\geq \dim(G\cdot f)\geq n-1-s$. 
\end{proof}
\end{lemma}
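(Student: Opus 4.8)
The whole argument rests on the $\mathrm{SL}(\CC^n)$-coinvariance of the Hessian map \cite[\S 5.1]{Dolg}: for $A\in \mathrm{SL}(\CC^n)$ and any form $g$ one has $H(A\circ g)=A\circ H(g)$. I would record this identity first (it comes from the chain rule $H(A\circ g)=(\det A)^2\,A\circ H(g)$, whose scalar drops out on $\mathrm{SL}$), since both parts are immediate consequences of it.

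Part (i) is then a one-line verification: for $A\in \mathrm{Stab}_{\mathrm{SL}(\CC^n)}(f)$ we get $A\circ h = A\circ H(f)=H(A\circ f)=H(f)=h$, so $A\in \mathrm{Stab}_{\mathrm{SL}(\CC^n)}(h)$. For part (ii) I would run an orbit-stabilizer argument in three steps. First, the same coinvariance shows that $H^{-1}(h)$ is invariant under $\mathrm{Stab}_{\mathrm{SL}(\CC^n)}(h)$ (if $H(g)=h$ and $A$ fixes $h$ then $H(A\circ g)=A\circ h=h$), so the identity component $G$ of $\mathrm{Stab}_{\mathrm{SL}(\CC^n)}(h)$ acts on the fiber. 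Second, I would compute $\dim G=n-1$: since $\alpha$ is a nonzero scalar, $\mathrm{Stab}(h)=\mathrm{Stab}((x_1\cdots x_n)^{d-2})$, whose structure is supplied by Lemma \ref{autofhess}(i), and whose identity component sits inside the diagonal torus cut out by $\prod_i\gamma_i^{d-2}=1$; intersecting with $\mathrm{SL}(\CC^n)$ adds the relation $\prod_i\gamma_i=1$, which already forces $\prod_i\gamma_i^{d-2}=1$, so $G$ is precisely the maximal torus of $\mathrm{SL}(\CC^n)$ of dimension $n-1$. Third, since $G$ is connected it fixes each irreducible component of $H^{-1}(h)$ setwise, hence $G\cdot f\subset X$ whenever $f\in X$; the orbit-stabilizer theorem then gives $\dim X\geq \dim(G\cdot f)=\dim G-\dim \mathrm{Stab}_G(f)\geq (n-1)-s$, using $\mathrm{Stab}_G(f)\subseteq \mathrm{Stab}_{\mathrm{SL}(\CC^n)}(f)$ and hence $\dim\mathrm{Stab}_G(f)\leq s$.

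This lemma is not deep, and the only points that genuinely need care are the two I flagged in the dimension count. The more delicate one is the computation $\dim G=n-1$: it is exactly the special shape $(x_1\cdots x_n)^{d-2}$ of $h$ that makes the stabilizer relation $\prod_i\gamma_i^{d-2}=1$ be subsumed by the determinant-one relation, so that passing from $\mathrm{GL}$ to $\mathrm{SL}$ loses no torus dimension; were this to fail, the bound in (ii) would weaken. The second point is the containment $G\cdot f\subset X$, which I would justify by the standard fact that a connected group permutes the finitely many irreducible components of an invariant variety trivially. Everything else is routine.
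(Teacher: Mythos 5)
Your proof is correct and follows essentially the same route as the paper's: coinvariance of the Hessian map for part (i), and for part (ii) the orbit of $f$ under the identity component $G$ of $\mathrm{Stab}_{\mathrm{SL}(\CC^n)}(h)$ inside the component $X$, with the orbit-stabilizer bound $\dim(G\cdot f)\geq n-1-s$. The only difference is that you spell out the steps the paper leaves implicit (invariance of the fiber, the computation $\dim G=n-1$ via Lemma \ref{autofhess}, and the fact that a connected group preserves each irreducible component), all of which are correct.
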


The two statements in Lemma \ref{inclusionStab+lowerbound} may be used as necessary conditions for the equality $H(f) = \alpha (x_1\cdots x_n)^{d-2}\neq 0$ to be satisfied by a given $f\in \CC[x_1,\ldots, x_n]_d$. An application of that comes next.

\begin{proposition}\label{rank=n+1}
Let $n\geq 2$ and $d\geq 3$. There is no $g\in \CC[x_1,\dots ,x_n]_d$ such that $H(g) = \alpha (x_1\cdots x_n)^{d-2}\neq 0$ with complex Waring rank $\mathrm{rk}_{\CC}(g) = n+1$. 
\begin{proof}
Up to the action of $\mathrm{GL}(\CC^n)$, we may assume that 
$g = x_1^d+\cdots +x_n^d+\ell^d$, where $\ell = \alpha_1x_1+\cdots + \alpha_n x_n$, for some $\alpha_i\in \CC$. If $\ell\in \langle x_{i_1},\ldots, x_{i_s}\rangle$ for some $s\leq n-1$, then setting $h = g-\sum _{j\neq i_1,\ldots, i_s} x_j^d
\in \CC[x_{i_1},\ldots, x_{i_s}]$, we find that $H(g) = \beta H(h)\cdot \prod_{j\neq i_1,\ldots, i_s} x_j^{d-2}$. 
Thus we reduce to the case where $\alpha_i\neq 0$ for all $i$. 

It is a direct computation to see that $\dim \mathrm{Stab}_{\mathrm{GL}(\CC^n)}(g)=0$. By the assumption $H(g) = \alpha (x_1\cdots x_n)^{d-2}\neq 0$ and Lemma \ref{inclusionStab+lowerbound}, the dimension of the kernel of the differential at $g$ satisfies $\dim \mathrm{Ker}(dH_g)\geq n-1$. (This is because if  the dimension of the fiber at $H(g)$ satisfies $\dim H^{-1}(H(g)) \geq q$ then $\dim \mathrm{Ker}(dH_g)\geq q$, for any $q\in \NN$.) Now, following the argument in the proof of \cite[Lemma 7.2]{CO}, we find that $\mathrm{Ker}(dH_g)=0$, which is a contradiction. 
\end{proof}
\end{proposition}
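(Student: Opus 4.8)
The plan is to argue by contradiction: assuming such a $g$ exists, I would produce incompatible lower and upper bounds on $\dim\mathrm{Ker}(dH_g)$. I begin by normalizing. Since $\mathrm{rk}_\CC(g)=n+1>n$, the linear forms in a minimal decomposition must span all of $\CC^n$: otherwise, after a change of coordinates $g$ would involve fewer than $n$ variables, forcing a zero row in $\mathrm{Hess}(g)$ and hence $H(g)=0$ (impossible, as $H(g)=0$ is a $\mathrm{GL}(\CC^n)$-invariant condition). Thus $n$ of the forms are linearly independent, and acting by $\mathrm{GL}(\CC^n)$ I may take $g=x_1^d+\cdots+x_n^d+\ell^d$ with $\ell=\sum_i\alpha_i x_i$. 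If some $\alpha_i=0$, say $\ell\in\langle x_i : i\in I\rangle$ with $|I|=s<n$, then $g=h+\sum_{i\notin I}x_i^d$ with $h\in\CC[x_i:i\in I]$ a sum of $s+1$ powers; by additivity of Waring rank for coprime summands \cite[Theorem 3.2]{CCG} we get $\mathrm{rk}_\CC(h)=s+1$, while the Hessian is block diagonal, so $H(g)=\beta\,H(h)\prod_{i\notin I}x_i^{d-2}$ and unique factorization forces $H(h)=\gamma\prod_{i\in I}x_i^{d-2}$. This is a smaller instance of the proposition, so an induction on $n$ (the base $n=2$ being immediate, a univariate form having rank $1$) reduces everything to the case $\alpha_i\ne0$ for all $i$.

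Next I would establish the lower bound. A short computation shows $\dim\mathrm{Stab}_{\mathrm{GL}(\CC^n)}(g)=0$: the $n+1$ points $[x_1],\dots,[x_n],[\ell]$ form a projective frame, so for $d\ge3$ the minimal decomposition is unique up to scaling and permutation, and the hypothesis $\alpha_i\ne0$ forces any one-parameter subgroup fixing $g$ (e.g.\ a diagonal torus acting on $\sum x_i^d$ and on $\ell^d$) to be scalar; hence the stabilizer is finite and $s:=\dim\mathrm{Stab}_{\mathrm{SL}(\CC^n)}(g)=0$. Since $\dim\mathrm{Ker}(dH_g)$ is invariant under $\mathrm{GL}(\CC^n)$, I may read it off in the original coordinates, where $H(g)=\alpha(x_1\cdots x_n)^{d-2}$ and Lemma \ref{inclusionStab+lowerbound}(ii) applies directly: the component of $H^{-1}(H(g))$ through $g$ has dimension $\ge n-1-s=n-1$, and therefore $\dim\mathrm{Ker}(dH_g)\ge n-1\ge1$.

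Finally I would compute $\mathrm{Ker}(dH_g)$ directly and show it is $0$, contradicting the previous bound. By Jacobi's formula the differential is $dH_g(v)=\mathrm{tr}\bigl(\mathrm{adj}(\mathrm{Hess}(g))\cdot\mathrm{Hess}(v)\bigr)=\sum_{i,j}\mathrm{adj}(\mathrm{Hess}(g))_{ij}\,\partial_i\partial_j v$, so membership in $\mathrm{Ker}(dH_g)$ is a single second-order linear condition on $v$. Here $\mathrm{Hess}(g)=d(d-1)\bigl(D+\ell^{d-2}aa^T\bigr)$ with $D=\mathrm{diag}(x_1^{d-2},\dots,x_n^{d-2})$ and $a=(\alpha_1,\dots,\alpha_n)^T$, a diagonal-plus-rank-one matrix whose adjugate I can write out explicitly using the matrix-determinant lemma and the Sherman--Morrison formula. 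Following the strategy of the proof of \cite[Lemma 7.2]{CO}, I would then show that this condition forces every coefficient of $v$ to vanish, by evaluating the identity at suitably chosen points of the frame (or equivalently extracting monomials via apolarity) and peeling off the coefficients of $v$ one at a time. This injectivity of $dH_g$ at a general sum of $n+1$ powers is the crux and the step I expect to be the main obstacle, since the entries of $\mathrm{adj}(\mathrm{Hess}(g))$ are unwieldy and the vanishing conditions must be organized carefully; once it is in hand, the bounds $\dim\mathrm{Ker}(dH_g)\ge n-1$ and $\dim\mathrm{Ker}(dH_g)=0$ are contradictory, so no such $g$ exists.
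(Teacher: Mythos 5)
Your proposal follows the paper's own proof essentially step for step: the same normalization $g=x_1^d+\cdots+x_n^d+\ell^d$, the same reduction to the case $\alpha_i\neq 0$ for all $i$ (the paper does this tersely, you make the induction explicit), the same lower bound $\dim \mathrm{Ker}(dH_g)\geq n-1$ obtained by combining $\dim \mathrm{Stab}_{\mathrm{GL}(\CC^n)}(g)=0$ with Lemma \ref{inclusionStab+lowerbound}(ii), and the same final appeal to the argument of \cite[Lemma 7.2]{CO} to conclude $\mathrm{Ker}(dH_g)=0$ and reach a contradiction. On that last point you are candid that the injectivity computation is an unresolved ``obstacle''; note that the paper does exactly what you propose, namely it delegates this step to the argument of \cite{CO} without reproducing it, so structurally you are on the same footing. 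Your observation that $\dim\mathrm{Ker}(dH_g)$ is $\mathrm{GL}(\CC^n)$-invariant, so that the lemma can be applied in the original coordinates, is a point the paper glosses over and is handled correctly by you.

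Two of your supporting justifications, however, do not hold up as written. First, the citation of \cite[Theorem 3.2]{CCG} in the reduction step is inapt: that theorem concerns sums of pairwise coprime \emph{monomials}, and $h$ is not a monomial. You do not need it: subadditivity of Waring rank gives $n+1=\mathrm{rk}_{\CC}(g)\leq \mathrm{rk}_{\CC}(h)+(n-s)$, hence $\mathrm{rk}_{\CC}(h)\geq s+1$, while $\mathrm{rk}_{\CC}(h)\leq s+1$ is trivial. Second, and more seriously, your argument for $\dim \mathrm{Stab}_{\mathrm{GL}(\CC^n)}(g)=0$ rests on the claim that for $d\geq 3$ a rank-$(n+1)$ form in $n$ variables has a unique minimal decomposition up to scaling and permutation. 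This is false in small degree, including degrees covered by the proposition: a general ternary cubic has rank $4$ with a two-parameter family of decompositions, and a general binary quartic has rank $3$ with a one-parameter family, so the $n+1$ summands of $g$ need not be distinguished by $g$ at all. The conclusion $\dim \mathrm{Stab}_{\mathrm{GL}(\CC^n)}(g)=0$ is still true --- it is the ``direct computation'' the paper alludes to, e.g.\ one checks that the Lie algebra condition $\sum_{i,j} M_{ij}\, x_j\, \partial g/\partial x_i=0$ forces $M=0$ when all $\alpha_i\neq 0$ and $d\geq 3$ --- but your frame argument does not prove it, and you should replace it by this infinitesimal computation.
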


One of the tools at our disposal is a finite-dimensional algebra attached to a smooth polynomial called the {\it Jacobian ring}. Although in Proposition \ref{propmons} we will establish a sufficient condition on a smooth homogeneous polynomial with a monomial Hessian to be a Fermat polynomial, this approach seems to be weak towards answering Question \ref{quest} in the smooth case. 

\begin{definition}
Let $g\in \CC[x_1,\ldots,x_n]_d$. The Jacobian ring of $g$ is the quotient ring $R(g) = \mathbb C[x_1,\ldots, x_n]/J(g)$, where $J(g) = (\partial g/\partial x_1, \ldots, \partial g/\partial x_n)$. If $g$ is a smooth homogeneous polynomial, then $R(g)$ is a zero-dimensional local ring and a finite-dimensional graded Gorenstein $\CC$-algebra \cite[Corollary 4.2]{Huy}. Moreover, the highest non-zero degree summand (the {\it socle}) of $R(g)$ is $R(g)_{n(d-2)}\cong \mathbb C$ and generated by the class of its Hessian, i.e., $H(g)\notin J(g)$. 
\end{definition}

\begin{proposition}\label{propfermat}
Let $f$ be a Fermat polynomial. Suppose a smooth homogenous polynomial $g\in \mathbb C[x_1,\ldots, x_n]$ of degree $d\geq 3$ in $n\geq 2$ variables is such that its Jacobian ring satisfies $R(g) = R(f)$ and its Hessian $H(g) = \alpha (x_1\cdots x_n)^{d-2}$. Then $g$ is a Fermat polynomial. 
\begin{proof}
Since $R(g) = R(f)$, a result of Donagi \cite[Proposition 4.9]{Huy} shows that there exists $A\in \mathrm{PGL}(\mathbb C^n)$ such that $A\circ \mathcal V(f) = \mathcal V(g)$, where these are the corresponding degree $d$ smooth projective hypersurfaces. Note that $A$ must fix $\mathcal V(H(f))$, as $H(g)$ is a multiple of $H(f)$. Therefore $A\in \mathrm{Stab}_{\mathrm{PGL}(\mathbb C^n)}(\mathcal V(H(f)))$. By Lemma \ref{autofhess}(ii), one then finds $g = \beta_1x_1^d + \cdots + \beta_n x_n^d$. 
\end{proof}
\end{proposition}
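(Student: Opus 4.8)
The plan is to combine the generic Torelli-type theorem of Donagi (in the form \cite[Proposition 4.9]{Huy}) with the projective covariance of the Hessian map and the explicit stabilizer computation in Lemma \ref{autofhess}(ii). The backbone of the argument becomes visible once one observes that the Fermat polynomial itself has a Hessian of exactly the monomial shape prescribed by Question \ref{quest}.

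First I would record the Hessian of the Fermat polynomial $f = x_1^d + \cdots + x_n^d$: its Hessian matrix is diagonal with entries $d(d-1)x_i^{d-2}$, so $H(f) = (d(d-1))^n (x_1\cdots x_n)^{d-2}$. Combined with the hypothesis $H(g) = \alpha (x_1\cdots x_n)^{d-2}$, this shows that $H(g)$ is a nonzero scalar multiple of $H(f)$, whence $\mathcal V(H(g)) = \mathcal V(H(f)) = \mathcal V((x_1\cdots x_n)^{d-2})$. Next, from $R(g) = R(f)$, Donagi's result furnishes $A\in \mathrm{PGL}(\mathbb C^n)$ with $A\circ \mathcal V(f) = \mathcal V(g)$. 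The pivotal step is to upgrade this to $A\in \mathrm{Stab}_{\mathrm{PGL}(\mathbb C^n)}(\mathcal V(H(f)))$. For this I would use that the Hessian is a covariant: for any lift $\tilde A\in \mathrm{GL}(\mathbb C^n)$ one has $H(f\circ \tilde A) = (\det \tilde A)^2\,(H(f)\circ \tilde A)$ (this is precisely why $H$ is the $\mathrm{SL}(\mathbb C^n)$-coinvariant of \cite[\S 5.1]{Dolg}), so the Hessian hypersurface is carried equivariantly, giving $A\circ \mathcal V(H(f)) = \mathcal V(H(g))$. Together with $\mathcal V(H(g)) = \mathcal V(H(f))$ this forces $A$ to stabilize $\mathcal V(H(f))$.

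Finally I would invoke Lemma \ref{autofhess}(ii), which identifies this stabilizer with $(\mathbb C^{*})^{n}\rtimes \mathfrak S_n/\mathbb C^{*}$; concretely, $A$ is represented by a monomial matrix, a diagonal matrix times a permutation matrix. Applying such a matrix to $f = \sum_i x_i^d$ produces a form $\sum_i \beta_i x_i^d$ of the same diagonal type, and since $g$ agrees with $f\circ \tilde A^{-1}$ up to a scalar, I conclude $g = \beta_1 x_1^d + \cdots + \beta_n x_n^d$, a Fermat polynomial. I expect the main obstacle to be the middle step: promoting the mere existence of a projective equivalence $A$ between $\mathcal V(f)$ and $\mathcal V(g)$ to the statement that $A$ fixes the Hessian hypersurface. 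This hinges on the covariance of the Hessian map and, crucially, on the proportionality $H(g) \propto H(f)$; it is exactly this proportionality (not just the fact that $H(g)$ is some monomial) that lets the rigidity encoded in Lemma \ref{autofhess}(ii) pin $g$ down to a diagonal form.
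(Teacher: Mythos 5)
Your proposal is correct and follows essentially the same route as the paper: Donagi's theorem produces the projective equivalence $A$, the covariance of the Hessian together with the proportionality $H(g)\propto H(f)$ forces $A\in \mathrm{Stab}_{\mathrm{PGL}(\mathbb C^n)}(\mathcal V(H(f)))$, and Lemma \ref{autofhess}(ii) then pins $g$ down to diagonal form. The only difference is that you spell out the covariance computation $H(f\circ \tilde A) = (\det \tilde A)^2\,(H(f)\circ \tilde A)$ that the paper leaves implicit in the sentence ``$A$ must fix $\mathcal V(H(f))$, as $H(g)$ is a multiple of $H(f)$,'' which is a useful clarification but not a different argument.
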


\begin{proposition}\label{propmons}
Suppose a smooth homogenous polynomial $g\in S = \mathbb C[x_1,\ldots, x_n]$ of degree $d\geq 3$ in $n\geq 2$ variables is such that $H(g) = x_1^{b_1}\cdots x_n^{b_n}$. Suppose its Jacobian ideal $J(g)$ is a monomial ideal. Then $H(g) = (x_1\cdots x_n)^{d-2}$ and $g$ is a Fermat polynomial. 
\begin{proof}
The dimension of the Jacobian ring $R(g)$ as a complex vector space depends only on $n$ and $d$; see \cite[Proposition 4.3]{Huy}. Hence it coincides with the one of $R(f)$, where $f$ is a Fermat polynomial. One then has $\dim_{\mathbb C} R(g) = \dim_{\mathbb C} R(f) = (d-1)^n$. 

To see the last equality, let $f = \alpha_1x_1^d + \cdots + \alpha_n x_n^d$ be a Fermat polynomial. Its Jacobian ring is the quotient $R(f) = S/(x_1^{d-1},\ldots, x_n^{d-1})$. A monomial  $\CC$-basis for this algebra is formed by all monomial divisors of $H(f) = \alpha (x_1\cdots x_n)^{d-2}$, whose cardinality is $(d-1)^n$. 

Since $R(g)$ is a finite-dimensional graded artinian Gorenstein $\CC$-algebra, Macaulay's theorem \cite[Lemma 2.12]{IK99} gives a bijection between these Gorenstein algebras, whose socle is in degree $n(d-2)$, and homogenous polynomials of degree $n(d-2)$ up to $\CC^{*}$. This implies that there exists $h\in T=\mathbb C[y_1,\ldots,y_n]_{n(d-2)}$
such that $J(g) = \mathrm{Ann}(h)$, where $S$ acts by differentiation on $T$. Recall that $H(g) = x_1^{b_1}\cdots x_n^{b_n}\notin J(g) = \mathrm{Ann}(h)$. 
Since $\mathrm{Ann}(h)$ is a monomial ideal and $\dim_{\CC} R(g)_{n(d-2)}=1$, every monomial of degree $n(d-2)$ different from $H(g)$ is in $\mathrm{Ann}(h)$. Thus, writing a monomial expansion of $h$, we find that $h=y_1^{b_1}\cdots y_n^{b_n}$, up to scaling. So $\mathrm{Ann}(h) = (x_1^{b_1+1},\ldots, x_n^{b_n+1})$ and then $(d-1)^n = \dim_{\mathbb C} R(g) = \dim_{\mathbb C} T/\mathrm{Ann}(h) = \prod_{i=1}^n (b_i+1)$. 
Moreover, by definition of the Hessian, we have $\sum_{i=1}^n (b_i+1) = \left(\sum_{i=1}^n b_i\right)+n= n(d-2)+n = n(d-1)$. 

For any finite collection of nonnegative real numbers $\lbrace b_i+1\rbrace_{i\in [n]}$, we have the inequality between 
their arithmetic and geometric means (the AM-GM inequality): 
\begin{equation}\label{amgm}
d-1 = \frac{\sum_{i=1}^n(b_i+1)}{n} \geq \sqrt[n]{\prod_{i=1}^n (b_i+1)} = d-1,
\end{equation}
where the first and last equalities are consequences of the two identities on the $b_i$'s. Equality in the AM-GM inequality \eqref{amgm} is verified if and only if $b_1+1 = \cdots = b_n+1$. This is the case, and so $b_i = d-2$ for all $1\leq i\leq n$. Therefore we find $H(g) = (x_1\cdots x_n)^{d-2}$ and $J(g) = \mathrm{Ann}(h) = (x_1^{d-1},\ldots, x_n^{d-1}) = J(f)$. By Proposition \ref{propfermat}, $g$ is a Fermat polynomial. 
\end{proof}
\end{proposition}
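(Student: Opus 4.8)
The plan is to route everything through the graded artinian Gorenstein structure of the Jacobian ring $R(g)$ together with Macaulay duality, turning the hypothesis into a purely combinatorial statement about the exponent vector $(b_1,\dots,b_n)$. First I would record the two invariants that do not depend on $g$ beyond $(n,d)$: since $g$ is smooth, $\dim_{\mathbb C}R(g)$ depends only on $n$ and $d$, and evaluating it on a Fermat polynomial $f$ (whose Jacobian ideal is $(x_1^{d-1},\dots,x_n^{d-1})$) gives $\dim_{\mathbb C}R(g)=(d-1)^n$; the socle sits in degree $n(d-2)$ and is spanned by the class of $H(g)$. Because $R(g)$ is Gorenstein with one-dimensional socle in degree $n(d-2)$, Macaulay's theorem (as recorded in the excerpt, via \cite{IK99}) produces a form $h\in T=\mathbb C[y_1,\dots,y_n]_{n(d-2)}$, unique up to scalar, with $J(g)=\mathrm{Ann}(h)$, where $S$ acts on $T$ by differentiation.

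Next I would exploit the monomiality hypothesis. Since $J(g)=\mathrm{Ann}(h)$ is a monomial ideal and the top graded piece $R(g)_{n(d-2)}$ is one-dimensional, spanned by the class of $H(g)=x_1^{b_1}\cdots x_n^{b_n}$, every degree-$n(d-2)$ monomial other than $H(g)$ must already lie in $J(g)$. Reading this through the apolarity pairing forces $h$ to be the single dual monomial $y_1^{b_1}\cdots y_n^{b_n}$ (up to scaling): any other monomial occurring in $h$ would survive in the quotient and violate one-dimensionality of the socle. From $h=y_1^{b_1}\cdots y_n^{b_n}$ one reads off $\mathrm{Ann}(h)=(x_1^{b_1+1},\dots,x_n^{b_n+1})$, whence $\dim_{\mathbb C}R(g)=\prod_{i=1}^n(b_i+1)$.

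At this point I would assemble two numerical identities and let AM-GM do the rigidifying work. The dimension count gives $\prod_{i=1}^n(b_i+1)=(d-1)^n$, while $\deg H(g)=n(d-2)$ gives $\sum_{i=1}^n b_i=n(d-2)$, hence $\sum_{i=1}^n(b_i+1)=n(d-1)$. Thus the arithmetic mean of the nonnegative reals $\{b_i+1\}$ equals $d-1$ and their geometric mean equals $\sqrt[n]{(d-1)^n}=d-1$; the equality case of AM-GM forces $b_1+1=\cdots=b_n+1=d-1$, i.e. $b_i=d-2$ for all $i$. Therefore $H(g)=(x_1\cdots x_n)^{d-2}$ and $J(g)=(x_1^{d-1},\dots,x_n^{d-1})=J(f)$, so $R(g)=R(f)$; applying Proposition \ref{propfermat} then yields that $g$ is a Fermat polynomial.

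The step I expect to be the main obstacle is the passage from ``$J(g)$ monomial'' to ``$h$ is a monomial.'' The AM-GM finish is clean once the product and sum constraints are in hand, and those constraints are essentially bookkeeping; the conceptual heart is the careful use of Macaulay duality together with the one-dimensionality of $R(g)_{n(d-2)}$ to conclude that the dual socle generator has a single monomial support. One must be precise about which monomials of degree $n(d-2)$ are annihilated and verify that no off-diagonal monomial can appear in $h$ without destroying the Gorenstein socle; this is where I would spend most of the care, the remainder being the elegant but routine AM-GM rigidity argument.
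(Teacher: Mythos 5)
Your proof is correct and takes essentially the same route as the paper's own argument: Macaulay duality plus monomiality of $J(g)$ and one-dimensionality of the socle force the dual generator $h$ to be the single monomial $y_1^{b_1}\cdots y_n^{b_n}$, the resulting identities $\prod_{i=1}^n(b_i+1)=(d-1)^n$ and $\sum_{i=1}^n b_i=n(d-2)$ feed into the equality case of AM-GM to give $b_i=d-2$, and Proposition \ref{propfermat} concludes. There are no gaps; the step you flagged as the main obstacle is handled exactly as in the paper.
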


We do not know whether after removing the assumption on $H(g)$ from Proposition \ref{propmons} the conclusion still holds true. Note that the homogeneous polynomials in Remark \ref{exampleothermons} are all singular. 
 
\begin{conjecture}\label{onlyfermat}
Suppose a homogeneous polynomial $g\in S = \mathbb C[x_1,\ldots, x_n]$ of degree $d\geq 3$ in $n\geq 2$ variables is such that its Hessian $H(g)$ is a monomial. Then $g$ is smooth if and only if $H(g) = (x_1\cdots x_n)^{d-2}$ and $g$ is a Fermat polynomial. 
\end{conjecture}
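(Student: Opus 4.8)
\textbf{Proof strategy for Conjecture \ref{onlyfermat}.} The plan is to separate the two implications and to funnel the hard one into the already-proved Proposition \ref{propmons}. The implication ``$\Leftarrow$'' is immediate: a Fermat polynomial $\alpha_1x_1^d+\cdots+\alpha_nx_n^d$ with all $\alpha_i\neq 0$ defines a smooth hypersurface, and (as in \eqref{firstidentityhessian}, or directly from $J(g)=(x_1^{d-1},\ldots,x_n^{d-1})$) it has Hessian $\alpha(x_1\cdots x_n)^{d-2}$. So the whole content lies in ``$\Rightarrow$'': assuming $g$ smooth with $H(g)=x_1^{b_1}\cdots x_n^{b_n}$, prove $b_1=\cdots=b_n=d-2$ and that $g$ is Fermat.

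First I would record what is already available from smoothness. The Jacobian ring $R(g)$ is a graded Artinian Gorenstein algebra with socle in degree $n(d-2)$ generated by the class of $H(g)$, and $\dim_{\CC}R(g)=(d-1)^n$; by Macaulay duality there is a form $h\in T=\CC[y_1,\ldots,y_n]_{n(d-2)}$, unique up to scalar, with $J(g)=\mathrm{Ann}(h)$. Since Proposition \ref{propmons} already proves the conclusion under the extra hypothesis that $J(g)$ is a monomial ideal, the entire statement reduces to the single assertion: \emph{if $g$ is smooth and its socle generator $H(g)$ is a monomial, then the dual generator $h$ is a monomial, equivalently $J(g)$ is a monomial ideal.} Once this is granted, $\mathrm{Ann}(h)=(x_1^{b_1+1},\ldots,x_n^{b_n+1})$, the two numerical identities $\sum_i(b_i+1)=n(d-1)$ and $\prod_i(b_i+1)=(d-1)^n$ hold, and the AM--GM step \eqref{amgm} forces $b_i=d-2$; Proposition \ref{propmons} (through Proposition \ref{propfermat}) then identifies $g$ as a Fermat polynomial, the coordinates being pinned down by Lemma \ref{autofhess}.

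To attack the reduced assertion I would exploit the diagonal torus. The monomial $H(g)$ is a $(\CC^{*})^n$-eigenvector, and since the Hessian map is $\mathrm{SL}(\CC^n)$-coinvariant (Lemma \ref{inclusionStab+lowerbound}), every $t\circ g$ in the torus orbit again has monomial Hessian proportional to $x_1^{b_1}\cdots x_n^{b_n}$; so the solution set is torus-invariant. Writing the weight decomposition $g=\sum_a c_a x^a$, the goal is to show $c_a=0$ unless $a=d\,e_i$, i.e. that $g$ is a sum of pure powers. Requiring $H(g)=\det(\partial_i\partial_j g)_{i,j}$ to collapse to a single monomial imposes, for each ``wrong'' target monomial, the vanishing of an explicit form of degree $n$ in the coefficients $c_a$; this is the multivariable analogue of the quadratic relations $\sum_{k+\ell=j+2}\gamma_{(k,\ell)}a_ka_\ell$ driving the proofs of Theorem \ref{binaryodd} and Theorem \ref{binaryeven}. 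The plan is to propagate vanishing from these relations, now anchored not by the normalization $a_0=a_1=0$ (which used singularity and Remark \ref{factsonhessians}) but by smoothness, i.e. by the fact that $J(g)$ is a complete intersection generated in degree $d-1$ with $H(g)\notin J(g)$.

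The main obstacle is precisely this last step, and it is why the statement is recorded as a conjecture. There is no general principle that a graded Gorenstein algebra whose socle merely \emph{admits} a monomial representative must itself be a monomial algebra, so smoothness must enter in an essential way. For $n\geq 3$ the cancellation conditions are of degree $n$ rather than quadratic in the $c_a$, and the exponents are no longer organized along a single linear order, so the inductive propagation used in the binary case has no obvious replacement. Equivalently, one must rule out smooth deformations of the Fermat point inside the fibre $H^{-1}(x_1^{b_1}\cdots x_n^{b_n})$ beyond the torus directions of Lemma \ref{inclusionStab+lowerbound}(ii); controlling the possibly non-monomial part of the dual generator $h$ is exactly where the combinatorial method stalls.
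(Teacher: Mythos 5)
The statement you were asked to prove is Conjecture \ref{onlyfermat}: it is left \emph{open} in the paper, so there is no proof of it there to compare against, and your write-up---candidly---is not a proof either. What you do have is correct as far as it goes and is consistent with the paper's partial results: the direction ``$\Leftarrow$'' is immediate; smoothness gives the Artinian Gorenstein Jacobian ring $R(g)$ with $\dim_{\CC}R(g)=(d-1)^n$, socle in degree $n(d-2)$ generated by the class of $H(g)$, and Macaulay dual generator $h$ with $J(g)=\mathrm{Ann}(h)$; and \emph{if} one knew that $J(g)$ is a monomial ideal, then Proposition \ref{propmons} (the two identities $\sum_i(b_i+1)=n(d-1)$ and $\prod_i(b_i+1)=(d-1)^n$, the AM--GM step \eqref{amgm}, then Proposition \ref{propfermat} and Lemma \ref{autofhess}) would indeed finish the argument.

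The genuine gap is the step you flag yourself, but it is worth saying precisely why it is the \emph{whole} problem and not a smaller residue of it. First, your ``reduced assertion'' (smooth $+$ monomial Hessian $\Rightarrow J(g)$ monomial) is not a reduction in strength: it is logically equivalent to the hard direction of the conjecture, since a Fermat polynomial has monomial Jacobian ideal, and Proposition \ref{propmons} supplies the converse; so the funnel repackages the conjecture rather than advancing it. Second, the hypothesis cannot be exploited through the socle alone: for \emph{any} graded Artinian Gorenstein quotient of $S$, the monomials span each graded piece, so the one-dimensional socle is always generated by the class of \emph{some} monomial; thus ``the socle admits a monomial representative'' carries no information, and non-monomial apolar ideals realize it trivially (e.g.\ $h=y_1^2+y_2^2$ has $\mathrm{Ann}(h)=(x_1x_2,\;x_1^2-x_2^2)$, yet its socle is spanned by the class of $x_1^2$). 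The actual hypothesis is that the specific determinant $H(g)$, computed from $g$, has a single term, and after your funneling this information is no longer used except through the socle class, where it is vacuous. Third, the fallback combinatorial scheme has no anchor: in Theorems \ref{binaryodd} and \ref{binaryeven} the induction is seeded by $a_0=a_1=0$, which comes from \emph{singularity} via Remark \ref{factsonhessians}, whereas here smoothness provides no vanishing coefficient to start the propagation, and the torus-invariance remark only restates what Lemma \ref{inclusionStab+lowerbound}(ii) already records about the fiber $H^{-1}(H(g))$. So your text should be read as a sound reformulation plus a program---an accurate map of where the difficulty sits---but not as a proof, which is consistent with the statement being a conjecture.
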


\begin{definition}\label{Shepgroups}
Let $A\in \mathrm{GL}(\mathbb C^n)$ and let $I_n$ be the $n\times n$ identity matrix. The linear transformation $A$ is a {\it unitary reflection} if it has finite order and $\mathrm{Ker}(A-I_n)$ is a codimension-one subspace, i.e. $A$ has finite order and fixes pointwise a hyperplane in $\mathbb C^n$. 
A finite group generated by unitary reflections is called a {\it unitary group generated by reflections} ({\it u.g.g.r.}). \end{definition}

\begin{remark}
A complete classification of u.g.g.r. was found by Shephard and Todd \cite{ST}.  These include as particular cases the finite Euclidean reflection groups (called {\it Coxeter groups}).  An interesting subfamily of u.g.g.r. is the one of {\it Shephard groups} arising as a symmetry group of a {\it regular complex polytope}, defined and classified by Shephard \cite{Shephard}. 

The equivalence stated in \cite[Theorem 5.10]{OS} characterises Shephard and Coxeter groups among all the u.g.g.r. groups. Item (v) in {\it loc. cit.} states that the Hessian of a minimal degree invariant form under a Shephard group $G$ is a suitable product of powers of linear functionals (each functional corresponds to a hyperplane fixed by an element in $G$).
This very last statement and its similarity with Question \ref{quest} was our motivation to look at these finite groups.  
\end{remark}

\begin{example}\label{g1n}
The group $G(d,1,n) = \mathbb Z_d^n \rtimes \mathfrak S_n \subset (\mathbb C^*)^{n}\rtimes \mathfrak S_n$ is a 
Shephard group.  As a matrix group, it may be realized as the group of $n\times n$ permutation matrices 
whose nonzero entries are $d$-th roots of unity. 
\end{example}

\begin{example}\label{gdnn}
Let $n,d\geq 2$ and $n|d$. As a matrix group, the subgroup $G(d, n, n)\subset G(d,1,n)$ consists of permutation matrices of the form $\mathrm{Diag}(\theta^{a_1},\ldots, \theta^{a_n})\circ \sigma$, where $\sigma\in \mathfrak S_n$ acts by permutation and $\mathrm{Diag}$ is a diagonal matrix with the indicated entries,  $\theta$ is a primitive $d$-th root of unity and $\sum_{i=1}^n a_i\equiv 0 \mod n$. The group $G(d, n, n)$ is a u.g.g.r. but not a Shephard group. In particular, \cite[Theorem 5.10]{OS}(v) fails. 
\end{example}

\begin{remark}\label{stabsofhompoly}
The groups described above are contained in the stabilizers of homogeneous polynomials that are related to Question \ref{quest}. 
\begin{enumerate}
\item[(i)] Let $f=x_1^d+\cdots +x_n^d$. Then $\mathrm{Stab}_{\mathrm{GL}(\mathbb C^n)}(f) = G(d,1,n)$. 
\item[(ii)] Let $n,d\geq 2$ and $n|d$. If $f= (x_1\cdots x_n)^{d/n}$ then $G(d,n,n) \subset \mathrm{Stab}_{\mathrm{GL}(\mathbb C^n)}(f)$. 
\end{enumerate}
\end{remark}

\begin{proposition}\label{gdnnprop}
Let $n\geq 2$ and $d\geq 3$ with $n|d$. Let $f\in \CC[x_1,\ldots,x_n]_d$ be such that its Hessian $H(f) = \alpha (x_1\cdots x_n)^{d-2}\neq 0$. Then $f=  \beta (x_1\cdots x_n)^{d/n}$ or $f=\beta(x_1^d + \cdots + x_n^d)$ if and only if $G(d,n,n)\subset \mathrm{Stab}_{\mathrm{GL}(\mathbb C^n)}(f)$. 
\begin{proof}
One implication is Remark \ref{stabsofhompoly}. Suppose $G(d,n,n)\subset \mathrm{Stab}_{\mathrm{GL}(\mathbb C^n)}(f)$,
i.e. $f$ is an invariant under $G(d,n,n)$.  Then $f = \alpha_1(x_1^d + \cdots + x_n^d) + \alpha_2 (x_1\cdots x_n)^{d/n}$ as the invariant ring $\CC[x_1,\ldots,x_n]^{G(d,n,n)}$ is the polynomial ring \cite[Theorem 3, Appendix III.5]{Serre} generated by the elementary symmetric functions of degrees $1,\ldots, n-1$ evaluated at $x_1^d, \ldots, x_n^d$ and by $(x_1\cdots x_n)^{d/n}$ \cite[Part II, \S 6]{ST}. 

Suppose $\alpha_1,\alpha_2\neq 0$. In the monomial expansion of $H(f)$ we look for monomials divisible by $(x_1\cdots x_{n-2})^{d-2+2\frac{d}{n}}$. Equivalently, we search for monomials $m$ in the expansion of $H(f)$ such that the $x_i$-degree of $m$, for $i\in \lbrace 1,\cdots, n-2\rbrace$, is at least $d-2+2\frac{d}{n}$. One may write
\[
\deg_{x_i}(m) = h_1(d-2) + h_2\left(\frac{d}{n}-2\right) + h_3\left(\frac{d}{n} -1\right) + h_4\frac{d}{n} \geq d-2+2\frac{d}{n}. 
\]
Here $h_1$ and $h_2$ count a choice of a monomial from the $(i,i)$-th entry of the Hessian matrix $\mathrm{Hess}(f) = [\partial^2 f/\partial x_i \partial x_j]_{ij}$, $h_3$ counts a choice of a monomial from the $i$th row or from the $i$th column (but {\it not} from the $(i,i)$-th entry), and $h_4$ counts a choice of a monomial neither from the $i$th row nor from the $i$th column. 
Thus $0\leq h_1+h_2\leq 1, 1\leq h_1+h_2+h_3\leq 2$, and $h_1+h_2+h_3+h_4\leq n$. 

Suppose $h_1=0$. If $h_2=0$, then $h_3=2$ and $h_4\leq n-2$. So $h_3\left(\frac{d}{n} -1\right) + h_4\frac{d}{n}\leq 2(\frac{d}{n}-1) + (n-2)\frac{d}{n} = d-2 < d-2+2\frac{d}{n}$ and hence this case is not possible. Similarly, if $h_2 = 1$, then $h_3=0$ and this case is not possible. 
Therefore $h_1=1$ and $h_2=h_3=0$. Hence $h_1(d-2) + h_4\frac{d}{n} = d-2 + h_4\frac{d}{n}\geq d-2+2\frac{d}{n}$, which implies $h_4\geq 2$. Since this holds for every $i\in \lbrace 1,\ldots, n-2\rbrace$, the monomials we are looking for can only be monomials 
in the expansion of the product between $\gamma (x_1\cdots x_{n-2})^{d-2}$ ($\gamma\neq 0$ because $\alpha_1\neq 0$) and the 
determinant of the lower-right $2\times 2$ minor of $\mathrm{Hess}(f)$ given by
\begin{tiny}
\begin{equation}\label{product}
\begin{vmatrix} 
\alpha_1d(d-1)x_{n-1}^{d-2} + \alpha_2 \frac{d}{n}\left(\frac{d}{n}-1\right) (x_1\ldots x_{n-2})^{\frac{d}{n}}x_{n-1}^{\frac{d}{n}-2}x_n^{\frac{d}{n}} &&  \alpha_2 \left(\frac{d}{n}\right)^2 (x_1\ldots x_{n-2})^{\frac{d}{n}}x_{n-1}^{\frac{d}{n}-1}x_{n}^{\frac{d}{n}-1} \\

\alpha_2 \left(\frac{d}{n}\right)^2 (x_1\ldots x_{n-2})^{\frac{d}{n}}x_{n-1}^{\frac{d}{n}-1}x_{n}^{\frac{d}{n}-1}  &&  \alpha_1d(d-1)x_{n}^{d-2} + \alpha_2 \frac{d}{n}\left(\frac{d}{n}-1\right) (x_1\ldots x_{n-2})^{\frac{d}{n}}x_{n-1}^{\frac{d}{n}}x_n^{\frac{d}{n}-2} \\
\end{vmatrix}.
\end{equation}
\end{tiny} 

From \eqref{product}, we see that the monomial $(x_1\cdots x_{n-2})^{d-2+2\frac{d}{n}}x_{n-1}^{2\frac{d}{n}-2}x_n^{2\frac{d}{n}-2}$ appearing in $H(f)$
has coefficient 
\[
\gamma \alpha_2^2\cdot \left[\left(\frac{d}{n}\right)^2\left(\frac{d}{n}-1\right)^2 - \left(\frac{d}{n}\right)^4\right] = \gamma \alpha_2^2\left(\frac{d}{n}\right)^2\left(1-2\frac{d}{n}\right)\neq 0. 
\]
This is a contradiction and hence either $\alpha_1= 0$ or $\alpha_2= 0$. 
\end{proof}
\end{proposition}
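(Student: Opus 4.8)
The plan is to prove the two implications separately, the forward one being essentially already recorded and the reverse one carrying all the content. First I would dispose of the direction asserting that $f=\beta(x_1\cdots x_n)^{d/n}$ or $f=\beta(x_1^d+\cdots+x_n^d)$ implies $G(d,n,n)\subset \mathrm{Stab}_{\mathrm{GL}(\CC^n)}(f)$. Since rescaling a form by a nonzero constant does not alter its stabilizer in $\mathrm{GL}(\CC^n)$, this is exactly the content of Remark \ref{stabsofhompoly}: part (i) gives $G(d,n,n)\subset G(d,1,n)=\mathrm{Stab}_{\mathrm{GL}(\CC^n)}(x_1^d+\cdots+x_n^d)$, while part (ii) gives $G(d,n,n)\subset \mathrm{Stab}_{\mathrm{GL}(\CC^n)}((x_1\cdots x_n)^{d/n})$.

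For the reverse implication I would invoke invariant theory to pin down the shape of $f$. The hypothesis $G(d,n,n)\subset \mathrm{Stab}_{\mathrm{GL}(\CC^n)}(f)$ means precisely that $f$ lies in the invariant ring $\CC[x_1,\ldots,x_n]^{G(d,n,n)}$. By the classical description of this ring, a polynomial ring generated by the elementary symmetric functions of $x_1^d,\ldots,x_n^d$ in degrees $1,\ldots,n-1$ together with $(x_1\cdots x_n)^{d/n}$ \cite[Part II, \S 6]{ST}, its degree-$d$ graded piece is spanned by the two generators of degree $d$, since every other generator has degree at least $2d>d$. Hence $f=\alpha_1(x_1^d+\cdots+x_n^d)+\alpha_2(x_1\cdots x_n)^{d/n}$ for some $\alpha_1,\alpha_2\in\CC$, and the entire problem collapses to showing $\alpha_1\alpha_2=0$.

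To force $\alpha_1\alpha_2=0$ I would argue by contradiction, assuming both nonzero, and then exhibit in $H(f)$ a monomial other than $(x_1\cdots x_n)^{d-2}$ with nonzero coefficient, contradicting the hypothesis $H(f)=\alpha(x_1\cdots x_n)^{d-2}$. The idea is to exploit the near-diagonal structure of $\mathrm{Hess}(f)$: the Fermat part contributes a pure power $x_i^{d-2}$ to each diagonal entry, whereas the monomial part $(x_1\cdots x_n)^{d/n}$ distributes degree across all variables. I would single out monomials divisible by the large power $(x_1\cdots x_{n-2})^{d-2+2d/n}$ and run a degree-bookkeeping argument, tracking how many factors in a determinant expansion come from the $(i,i)$ entry, from the rest of the $i$-th row or column, or from neither (the counters $h_1,h_2,h_3,h_4$), to show that any such monomial is forced to pick the Fermat contribution $x_i^{d-2}$ in each of the first $n-2$ diagonal slots. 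Such a monomial therefore factors as $\gamma(x_1\cdots x_{n-2})^{d-2}$ times the determinant of the lower-right $2\times 2$ minor of $\mathrm{Hess}(f)$. Expanding that minor and reading off the coefficient of $(x_1\cdots x_{n-2})^{d-2+2d/n}x_{n-1}^{2d/n-2}x_n^{2d/n-2}$ yields a quantity proportional to $\alpha_2^2\,(d/n)^2\,(1-2d/n)$, which is nonzero since $d/n\ge 1$, giving the contradiction.

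I expect the main obstacle to be the degree-bookkeeping step rather than the final coefficient computation. Controlling exactly which terms of the full $n\times n$ determinant expansion can be divisible by $(x_1\cdots x_{n-2})^{d-2+2d/n}$ requires a careful case analysis of the admissible values of $h_1,h_2,h_3,h_4$ under the row/column constraints of the determinant, and it is here that one must verify that only the configuration feeding the Fermat power into the first $n-2$ diagonal entries survives. Once that reduction is secured, the remaining $2\times 2$ determinant and the nonvanishing of its relevant coefficient are short and essentially forced.
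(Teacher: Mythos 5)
Your proposal is correct and follows essentially the same route as the paper's own proof: the forward implication via Remark \ref{stabsofhompoly}, the reduction to $f=\alpha_1(x_1^d+\cdots+x_n^d)+\alpha_2(x_1\cdots x_n)^{d/n}$ via the Shephard--Todd description of $\CC[x_1,\ldots,x_n]^{G(d,n,n)}$ (where your degree-counting justification of this step is a nice touch the paper leaves implicit), and the identical contradiction scheme tracking monomials divisible by $(x_1\cdots x_{n-2})^{d-2+2d/n}$ down to the lower-right $2\times 2$ minor and the nonvanishing coefficient $\propto \alpha_2^2(d/n)^2(1-2d/n)$. Note only that both your sketch and the paper's argument are delicate at $n=2$, where the tracked monomial $x_{n-1}^{2d/n-2}x_n^{2d/n-2}$ coincides with the admissible $(x_1x_2)^{d-2}$ (and also picks up an $\alpha_1^2$ contribution), so there the contradiction should instead be extracted from the cross terms $x_1^{(3d-4)/2}x_2^{(d-4)/2}$ carrying the coefficient $\alpha_1\alpha_2$.
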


\begin{conjecture}\label{conjshep}
Let $n\geq 2$ and $d\geq 3$. Let $f\in \mathbb C[x_1,\ldots, x_n]_d$ be such that
its Hessian is $H(f) = \alpha (x_1\cdots x_n)^{d-2}\neq 0$. 
Then, up to conjugation, $\mathrm{Stab}_{\mathrm{GL}(\CC^n)}(f)$ contains a product of groups
of the form $G(d, 1, n-q)\times G(d, q, q)$ \textnormal{(}with $G(d,1,0)$ and $G(d,0,0)$ being trivial groups by convention\textnormal{)}, and $f$ is an invariant of minimal degree for such a group. 
\end{conjecture}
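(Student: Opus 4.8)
The plan is to deduce the conjecture from a structural \emph{normal form}: after a suitable element of $\mathrm{GL}(\CC^n)$, every $f$ with $H(f)=\alpha(x_1\cdots x_n)^{d-2}\neq 0$ should be a direct sum
\[
f = \sum_{j\in S} x_j^{d} + \sum_{i} \Bigl(\prod_{j\in B_i} x_j\Bigr)^{d/|B_i|},
\]
where $S$ and the $B_i$ partition $\{1,\dots,n\}$ and $|B_i|\mid d$. Granting this, both claims of the conjecture are immediate: by Remark \ref{stabsofhompoly} the stabilizer of such an $f$ contains $G(d,1,|S|)\times\prod_i G(d,|B_i|,|B_i|)$, a product of the prescribed type (a single $G(d,q,q)$ factor when there is one monomial summand), and $f$ is tautologically an invariant of minimal degree $d$ for this group. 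So the entire content is the classification.

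First I would pin down the ambient symmetry. Lemma \ref{inclusionStab+lowerbound}(i) gives $\mathrm{Stab}_{\mathrm{SL}(\CC^n)}(f)\subseteq\mathrm{Stab}_{\mathrm{SL}(\CC^n)}(H(f))$, and by Lemma \ref{autofhess}(i) the latter consists of monomial (generalized permutation) matrices. This is an \emph{upper} bound on the symmetry of $f$, so it cannot prove the conjecture directly; instead I would use it to organize $f$. Writing $T$ for the identity component of $\mathrm{Stab}(f)$ intersected with the diagonal torus, $T$-invariance forces every monomial of $f$ to have trivial $T$-weight, and the permutation part of the stabilizer permutes this support. In parallel, Remark \ref{factsonhessians} confines $\mathrm{Sing}(X_f)$ to the coordinate arrangement $\{x_1\cdots x_n=0\}$ cut out by $H(f)$ itself.

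The core is an induction on $n$. If $f$ is smooth, one expects $f$ to be a Fermat polynomial, yielding the single block $G(d,1,n)$; this is Conjecture \ref{onlyfermat}, which is known for $n=2$, for $n=d=3$ (Proposition \ref{d=3}), and under a monomial-Jacobian hypothesis (Proposition \ref{propmons}). If $f$ is singular, its singular locus meets some coordinate hyperplane $x_i=0$; I would restrict $f$ to that hyperplane, peel off the linear factor dictated by the Hessian, and thereby split the variables into smaller groups, each carrying a Hessian condition of the same shape to which the inductive hypothesis applies. The base cases $n=2$ are Theorems \ref{binaryodd} and \ref{binaryeven}, and the model block $\bigl(\prod_{j\in B}x_j\bigr)^{d/|B|}$ is precisely the counterexample constructed in Theorem \ref{negativeresult}.

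The hard part---and the reason the statement is only conjectural---is controlling the monomial expansion of $\det\mathrm{Hess}(f)$ for a polynomial of \emph{arbitrary} support, so as to rule out ``mixed'' configurations in which monomials belonging to different prospective blocks interfere in the Hessian. The explicit coefficient bookkeeping that drives the binary proofs (the coefficients $\gamma_{(k,\ell)}$ of Theorems \ref{binaryodd} and \ref{binaryeven}, and the rigidity of the matrix $C$ in Theorem \ref{negativeresult}) has no obvious counterpart once three or more variables occur in a single Hessian entry: the requirement that $H(f)$ collapse to one monomial becomes a large system of polynomial relations among the coefficients of $f$, whose only solutions we conjecture to be the direct sums above. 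A natural intermediate target is the torus-invariant case, in which $f$ is already a sum of monomials; here one could attempt to extend the rigidity argument of Proposition \ref{gdnnprop} from the full group $G(d,n,n)$ to arbitrary subtori, and only afterwards dispense with the monomial hypothesis.
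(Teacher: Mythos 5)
You have not given a proof, and indeed no proof exists to compare against: the statement is Conjecture \ref{conjshep}, which the paper leaves open. Your argument reduces it to an asserted normal form -- that every $f$ with $H(f)=\alpha(x_1\cdots x_n)^{d-2}\neq 0$ is, up to $\mathrm{GL}(\CC^n)$, a direct sum of Fermat variables and blocks $\bigl(\prod_{j\in B_i}x_j\bigr)^{d/|B_i|}$ -- and the deduction from that normal form to the conjecture is correct: the block-diagonal copy of $G(d,1,|S|)\times\prod_i G(d,|B_i|,|B_i|)$ lies in the stabilizer by Remark \ref{stabsofhompoly}, and $f$ has minimal invariant degree $d$ because the invariant ring of a product group acting on a direct sum is the tensor product of the factors' invariant rings. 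But the normal form itself is never established, and it is \emph{strictly stronger} than Conjecture \ref{conjshep} (it is essentially Conjecture \ref{onlyfermat} together with a splitting statement), so the net effect is to replace one open conjecture by a harder one. You acknowledge this, which is honest, but it means the proposal is a research plan rather than a proof.

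Two concrete defects in the plan itself. First, the inductive step is not an argument: restricting $f$ to a coordinate hyperplane does not intertwine with the Hessian (the Hessian of a restriction is not the restriction of the Hessian), and no mechanism is offered for why the variables should split into non-interacting blocks -- this is precisely the ``mixed support'' difficulty you concede, and it is the entire content of the problem once $n\geq 3$. Second, even your base case $n=2$ is incomplete: Theorems \ref{binaryodd} and \ref{binaryeven} classify only \emph{singular} binary forms satisfying the Hessian condition, and the smooth binary case is settled in the paper only for $d=3$ (Corollary \ref{cased=3}); your parenthetical claim that Conjecture \ref{onlyfermat} ``is known for $n=2$'' overstates what is proved. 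What your write-up does do correctly is assemble the paper's evidence (Theorems \ref{negativeresult}, \ref{binaryodd}, \ref{binaryeven}, Propositions \ref{d=3}, \ref{propmons}, \ref{gdnnprop}) into a coherent picture of why the conjectured stabilizer groups $G(d,1,n-q)\times G(d,q,q)$ are the expected answer, and your suggestion to attack the torus-invariant (monomial) case first, extending the rigidity computation of Proposition \ref{gdnnprop} to subtori, is a reasonable intermediate target; but none of this closes the gap.
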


\section{Local Koiran-Skomra's question}

We formulate a local version of the Koiran-Skomra's Question \ref{quest}: whether there exist
homogeneous polynomial solutions $g$ to the equality $H(g) = \alpha (x_1\cdots x_n)^{d-2}\neq 0$, that are close to a Fermat polynomial $f = \alpha_1x_1^d + \cdots + \alpha_n x_n^d$. To deal with this local version, we first compute the differential of the Hessian map. 

\begin{lemma}[{\bf \cite[Lemma 7.2]{CO}}] \label{formula}
Let $f, g\in \CC[x_1,\ldots, x_n]_d$ be two homogeneous polynomials of degree $d$. The linear function
\[
dH_f: \CC[x_1,\ldots, x_n]_d\rightarrow \CC[x_1,\ldots, x_n]_{n(d-2)},
\] 
defined as 
\[
g\mapsto \frac{d}{dt} H(f+tg)_{|t=0}
\] 
is the differential at $f$ of the Hessian map $H: \CC[x_1,\ldots, x_n]_d\rightarrow \CC[x_1,\ldots, x_n]_{n(d-2)}$. The image $dH_f(g)$ is the sum of the determinants of $n$ matrices $H^i$ whose $i$-th row is $H^i_{ij}= \frac{\partial^2 g}{\partial x_i \partial x_j}$, and whose $k$-th row for $k\neq i$ is $H^i_{kj} =\frac{\partial^2 f}{\partial x_k\partial x_j}$.  
\begin{proof}
Viewing  $p(t) = H(f+tg) = \det(\mathrm{Hess}(f+tg))$ as a polynomial in $t$, the image of $g$ under $dH_f$ is by definition the coefficient of $t$ in $p(t)$. 
By definition of determinant, every contribution to this coefficient must involve a unique row of the matrix $\mathrm{Hess}(f+tg)$: the $i$-th such contribution is the determinant of $H^i$ in the statement. 
\end{proof}
\end{lemma}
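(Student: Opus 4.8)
The plan is to reduce everything to the elementary fact that the determinant is multilinear and alternating in its rows, after recording that the Hessian matrix depends linearly on its argument.

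First I would observe that, since mixed second partials are linear in the input polynomial, one has the entrywise matrix identity $\mathrm{Hess}(f+tg) = \mathrm{Hess}(f) + t\,\mathrm{Hess}(g)$, with $(k,j)$-entry equal to $\frac{\partial^2 f}{\partial x_k \partial x_j} + t\,\frac{\partial^2 g}{\partial x_k \partial x_j}$. Consequently $p(t) := H(f+tg) = \det\!\big(\mathrm{Hess}(f) + t\,\mathrm{Hess}(g)\big)$ is a polynomial in $t$ of degree at most $n$, and by definition $dH_f(g) = p'(0)$ is precisely its coefficient of $t^1$.

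Next I would extract this coefficient by multilinearity. Denote by $R_k$ and $R_k'$ the $k$-th rows of $\mathrm{Hess}(f)$ and $\mathrm{Hess}(g)$ respectively, so that the $k$-th row of $\mathrm{Hess}(f+tg)$ is $R_k + tR_k'$. Expanding $\det(R_1+tR_1', \ldots, R_n+tR_n')$ by multilinearity in each of the $n$ rows and collecting powers of $t$, the constant term is $\det(\mathrm{Hess}(f)) = H(f)$, while the coefficient of $t^1$ is $\sum_{i=1}^n \det(R_1, \ldots, R_{i-1}, R_i', R_{i+1}, \ldots, R_n)$, that is, the sum over $i$ of the determinant of the matrix obtained from $\mathrm{Hess}(f)$ by replacing its $i$-th row with the $i$-th row of $\mathrm{Hess}(g)$. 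This matrix is exactly the $H^i$ of the statement, whose $i$-th row is $\frac{\partial^2 g}{\partial x_i \partial x_j}$ and whose remaining rows are $\frac{\partial^2 f}{\partial x_k \partial x_j}$, so $dH_f(g) = \sum_{i=1}^n \det(H^i)$ as claimed.

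There is no serious obstacle here: the single point requiring care is the bookkeeping that matches each degree-one monomial in the expansion, obtained by selecting the $t$-summand from a unique row and the $f$-summand from all others, with the correct row-replaced matrix $H^i$. One can equally phrase this via the Leibniz (permutation) expansion: each permutation term is a product of $n$ entries, each affine in $t$, and its linear coefficient is the sum of the products in which a single factor contributes its $g$-derivative; regrouping these by which row supplies the $g$-term recovers the same formula.
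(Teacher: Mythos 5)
Your proposal is correct and follows essentially the same route as the paper: both identify $dH_f(g)$ with the coefficient of $t$ in $p(t)=\det(\mathrm{Hess}(f+tg))$ and extract it by noting that each contribution selects the $g$-part from exactly one row, yielding $\sum_{i=1}^n \det(H^i)$. Your write-up merely makes explicit (via $\mathrm{Hess}(f+tg)=\mathrm{Hess}(f)+t\,\mathrm{Hess}(g)$ and row multilinearity, or equivalently the Leibniz expansion) the bookkeeping that the paper's proof compresses into one sentence.
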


\begin{proposition}\label{prop:dim of ker fermat}
Let $f = \sum_{i=1}^n x_i^d \in \CC[x_1,\ldots, x_n]_d$. 
\begin{enumerate}
\item[(i)] If $d\geq n+1$, then $\dim(\mathrm{Ker}(dH_f))=n-1$. 
\item[(ii)] If $d\leq n$, then $\dim(\mathrm{Ker}(dH_f))=n-1+\binom{n}{d}$. 
\end{enumerate}
\begin{proof}
\noindent (i). Let $g \in \CC[x_1,\ldots, x_n]_d$. Then, using the formula in Lemma \ref{formula}, 
we find 
\[
dH_f(g) = (d(d-1))^{n-1}\cdot \sum_{i=1}^n \prod_{j\neq i} x_j^{d-2}\frac{\partial^2 g}{\partial x_i^2}. 
\]
Denote $g_i = \prod_{j\neq i} x_j^{d-2}\frac{\partial^2 g}{\partial x_i^2}$. For two distinct indices $0\leq h, k\leq n$,
consider the monomial expansion of $g_h$ and $g_k$. Note that $\deg(g_h)=\deg(g_k) = n(d-2)$.  
A monomial shared by $g_h$ and $g_k$ must be divisible by both $\prod_{j\neq h} x_j^{d-2}$ and $\prod_{j\neq k} x_j^{d-2}$. Thus such a monomial is divisible by $\prod_{i=1}^{n} x_i^{d-2}$. Since any monomial of the $g_i$ has degree $n(d-2)$, the only shared monomial by any two 
of the $g_i$'s is $\prod_{i=1}^{n} x_i^{d-2}$. 

Suppose $g\in \mathrm{Ker}(dH_f)$. Then the condition $dH_f(g)=0$ implies that the coefficients of all monomials appearing only in a single $g_i$ 
vanish. Since $d\geq n+1$, all the coefficients of $g$ appear in a unique $g_i$, except the coefficients $\beta_i$ of the powers $x_i^d$ appearing in the monomial expansion of $g$.
The fact that the only shared monomial by all the $g_i$'s is $\prod_{i=1}^{n} x_i^{d-2}$ implies the linear equation $\sum_{i=1}^n \beta_i=0$. (Note that this is the tangent space of the algebraic torus in the fiber at $f$.) Thus $\dim(\mathrm{Ker}(dH_f)) = n-1$. \\ 
\noindent (ii). Keep the notation from above. Since $d\leq n$, the coefficients of monomials in $g$ annihilated by all the quadratic differentials, i.e. the square-free monomials $x_{i_1}\cdots x_{i_d}$ for some $1\leq i_1,\ldots, i_d\leq n$, do not appear 
in any of the $g_i$'s summands. There are $\binom{n}{d}$ of those. This shows the lower bound $\dim(\mathrm{Ker}(dH_f))\geq n-1+\binom{n}{d}$. The equality comes from the direct verification that any other coefficient in $g$ appears as coefficient of a unique monomial in a unique $g_i$ and thus must vanish. 
\end{proof}
\end{proposition}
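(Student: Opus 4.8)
The plan is to feed the Fermat polynomial $f=\sum_{i=1}^n x_i^d$ into the explicit differential of Lemma~\ref{formula} and then convert the kernel computation into a count of monomials. First I would record that $\mathrm{Hess}(f)=d(d-1)\,\mathrm{Diag}(x_1^{d-2},\ldots,x_n^{d-2})$ is diagonal, so in each matrix $H^i$ of Lemma~\ref{formula} every row except the $i$-th has a single nonzero (diagonal) entry; hence only the identity permutation survives in $\det(H^i)$, giving $\det(H^i)=(d(d-1))^{n-1}\prod_{j\neq i}x_j^{d-2}\,\partial^2 g/\partial x_i^2$. Summing over $i$ produces
\[
dH_f(g)=(d(d-1))^{n-1}\sum_{i=1}^n g_i,\qquad g_i:=\prod_{j\neq i}x_j^{d-2}\,\frac{\partial^2 g}{\partial x_i^2},
\]
each $g_i$ homogeneous of degree $n(d-2)$, so that $g\in\mathrm{Ker}(dH_f)$ if and only if $\sum_i g_i=0$.

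Next I would track a monomial $x^\alpha$ with $|\alpha|=d$ through this map: whenever $\alpha_i\geq 2$ it contributes to $g_i$ the monomial $M_i(\alpha)=x_i^{\alpha_i-2}\prod_{j\neq i}x_j^{\alpha_j+d-2}$ with nonzero coefficient $\alpha_i(\alpha_i-1)$. The key structural fact is a recovery property: in any monomial of $g_i$ the exponent of $x_i$ is at most $d-2$ while every other exponent is at least $d-2$. Consequently a target monomial possessing some exponent strictly below $d-2$ pins down the index $i$, and then $\alpha$, uniquely; the unique target monomial with all exponents equal to $d-2$ is $\prod_j x_j^{d-2}$. Thus every target monomial other than $\prod_j x_j^{d-2}$ is produced by exactly one source monomial through exactly one summand $g_i$.

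I would then partition the degree-$d$ monomials into three classes: (a) the square-free ones ($\alpha_i\leq 1$ for all $i$), which are killed by every $\partial^2/\partial x_i^2$ and contribute nothing; (b) the pure powers $x_i^d$, each contributing only to the shared monomial $\prod_j x_j^{d-2}$ with coefficient $d(d-1)$; and (c) all the rest, each of which (being neither square-free nor a pure power) yields by the previous step at least one target monomial distinct from $\prod_j x_j^{d-2}$ and reached by no other source. Imposing $\sum_i g_i=0$ coefficient by coefficient then forces every type~(c) coefficient to vanish (via its private target), the single relation $\sum_i\beta_i=0$ on the coefficients $\beta_i$ of the pure powers (via the shared target), and nothing on the type~(a) coefficients. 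The kernel therefore has dimension $(n-1)+\#\{\text{square-free degree-}d\text{ monomials in }n\text{ variables}\}$, which is $n-1$ when $d\geq n+1$ and $(n-1)+\binom{n}{d}$ when $d\leq n$.

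The step I expect to be the crux is the recovery property together with its bookkeeping consequences: verifying that distinct summands $g_i$ overlap only in the single monomial $\prod_j x_j^{d-2}$, that this overlap is fed solely by the pure powers, and that every other non-square-free source monomial leaves a private nonzero contribution to some $g_i$ and hence imposes the vanishing of its own coefficient. Once this is established, the two regimes differ only by whether square-free monomials of degree $d$ exist in $n$ variables, and the dimension count falls out immediately.
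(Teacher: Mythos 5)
Your proposal is correct and takes essentially the same approach as the paper: the same diagonal-Hessian evaluation of Lemma~\ref{formula} giving $dH_f(g)=(d(d-1))^{n-1}\sum_i g_i$, the same identification of $\prod_j x_j^{d-2}$ as the unique monomial shared among the $g_i$ (fed only by the pure powers, yielding the single relation $\sum_i \beta_i=0$), and the same count of square-free monomials in case (ii). Your ``recovery property'' is just an explicit write-up of the uniqueness claim that the paper calls a ``direct verification,'' so the two arguments coincide in substance.
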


\begin{remark}
Proposition \ref{prop:dim of ker fermat} implies that, when $d\geq n+1$, the local version of Koiran-Skomra's Question \ref{quest} has a positive answer. \end{remark}

For binary homogeneous polynomials, we offer a description of the differential at all monomials. We believe that the differential is important to understand better the Hessian map. For this part we were inspired by \cite{CO}, where the authors provide ample evidence on how useful the differential is. 

\begin{proposition}\label{binarymonomials}
Let $d\geq 2$ and let $f = x_1^{k}x_2^{d-k}\in \CC[x_1,x_2]_d$, with $0 < k\leq d$. 
\begin{enumerate}
\item[(i)] If $k=d$, then $\dim(\mathrm{Ker}(dH_f)) = 2$.

\item[(ii)] If $k=d-1$, then $\dim(\mathrm{Ker}(dH_f)) = 1$, unless $d=2$. In the last case, $\dim(\mathrm{Ker}(dH_f)) = 2$. 

\item[(iii)] If $k\leq d-2$, then $\dim(\mathrm{Ker}(dH_f))$ is the number of indices $0\leq j\leq d$ such that 
\[
d(k-j)^2 - d(k+j) + 2kj=0.
\] 
 \item[(iv)] Let $d\geq 3$ and $k\leq d-1$. Then $\dim(\mathrm{Ker}(dH_f)) \le 1$, unless $d=2k$ and
$k$ is a square. In this case $\mathrm{Ker}(dH_f)$ is the span of $x_1^{k+\sqrt{k}}x_2^{k-\sqrt{k}}$ and $x_1^{k-\sqrt{k}}x_2^{k+\sqrt{k}}$.
\end{enumerate}
\begin{proof}
Statement (i) is a direct computation using the formula in Lemma \ref{formula}. \\
By Lemma \ref{formula}, for $g\in \CC[x_1,x_2]_d$, we have 
\begin{small}
\[
dH_f(g) = \begin{vmatrix}
\partial^2 g/\partial x_1^2 &  \partial^2 g/\partial x_1\partial x_2 \\
(d-k)k x_1^{k-1} x_2^{d-k-1} &  (d-k)(d-k-1) x_1^{k}x_2^{d-k-2}  \\
\end{vmatrix}+\begin{vmatrix}
k(k-1) x_1^{k-2}x_2^{d-k}  & (d-k)k x_1^{k-1}x_2^{d-k-1} \\
\partial^2 g/\partial x_1\partial x_2 &  \partial^2 g/\partial x_2^2 \\
\end{vmatrix}. 
\]
\end{small}
Let $g = \sum_{i=0}^d a_i x_1^i x_2^{d-i}$. Thus
\[
dH_f(g) = (d-k)(d-k-1)\left[\sum_{i=0}^d a_i i(i-1) x_1^{k+i-2} x_2^{2d-k-i-2}\right] + 
\]
\[
-2(d-k)k\left[\sum_{i=0}^d a_i(d-i)i x_1^{k+i-2} x_2^{2d-k-i-2}\right]+
\]
\[
+ k(k-1)\left[\sum_{i=0}^d a_i (d-i)(d-i-1) x_1^{k+i-2} x_2^{2d-k-i-2}\right]. 
\]
The coefficient of the monomial $a_j x_1^{k+j-2} x_2^{2d-k-j-2}$ is 
\begin{equation}\label{expressionofcoeff}
g(d,k,j)= (d-k)(d-k-1)j(j-1) -2(d-k)k(d-j)j + k(k-1)(d-j)(d-j-1) = 
\end{equation}
\[
= (d-1)\left[d(k-j)^2 - d(k+j) + 2kj\right].
\]
(ii). Suppose $k=d-1$. Then $g(d, d-1,j) = (d-1)(d-j)(d^2-dj-3d+2)$. Thus we have $g(d,d-1,d)=0$. 
For $j=d-s\leq d-1$, we find that $g(d, d-1,d-s) = (d-1)(d-j)(sd-3d+2)$. So $g(d, d-1,d-s)=0$
if and only if $d= -\frac{2}{s-3}$, which is only possible if $s=1$ or $s=2$. If $s=1$, then $d=1$,
which is excluded from the assumptions as it is a trivial case. 
For $s=2$, we have $d=2$ and $j=0$. In this case, in the expansion of $dH_f(g)$ the only vanishing coefficients are those of $a_0$ and $a_2$ and whence $\dim(\mathrm{Ker}(dH_f)) = 2$. 
Otherwise, when $k=d-1$, in the expansion of $dH_f(g)$ the only vanishing coefficient is the one of $a_d$ and so $\dim(\mathrm{Ker}(dH_f)) = 1$.\\
(iii). Using the approach in (ii) above, when $k\leq d-2$ the only vanishing coefficients of the monomials $a_j x_1^{k+j-2} x_2^{2d-k-j-2}$ are for those indices $j$ satisfying $g(d,k,j)=0$. Then it is immediate to see that $\dim(\mathrm{Ker}(dH_f))$ is the number of indices $j$ such that $g(d,k,j)=0$. (Clearly, the equation is quadratic in $j$ and hence $\dim(\mathrm{Ker}(dH_f))\leq 2$.) \\
(iv). By part (iii), $\dim \mathrm{Ker}(H_f) > 1$ if and only if the following polynomial in $j$
\begin{equation}\label{eqot1}
j^2 +j\left(-2k-1+\frac{2k}{d}\right) +k^2-k =0
\end{equation}
has two integral roots $0\le j_1< j_2\le d$. Assume the existence of such $j_1,j_2$. Thus
\begin{equation}\label{eqot2}
j_1j_2=k(k-1) \mbox{ and }  j_1+j_2 = 2k+1 - \frac{2k}{d}.
\end{equation}
Since $0<k<d$, \eqref{eqot2} gives $d=2k$ and $j_1+j_2 =2k$. So $j_1 =k-\sqrt{k}$ and $j_2=k+\sqrt{k}$. Assuming the existence of these integral solutions, $d=2k$ where $k$ is a square. 
\end{proof}
\end{proposition}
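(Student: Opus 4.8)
The plan is to reduce the entire statement to a root-counting problem for a single quadratic in one integer variable, by making the differential $dH_f$ completely explicit in the binary case. First I would apply Lemma \ref{formula} with $n=2$: the Hessian matrix is then $2\times 2$, so $dH_f(g)$ is a sum of exactly two $2\times 2$ determinants, one whose first row carries the second partials of $g$ and whose second row carries those of $f$, and one with the roles of the rows exchanged. I would record the three second partials of $f=x_1^kx_2^{d-k}$, namely $k(k-1)x_1^{k-2}x_2^{d-k}$, $k(d-k)x_1^{k-1}x_2^{d-k-1}$ and $(d-k)(d-k-1)x_1^kx_2^{d-k-2}$, and expand $g=\sum_{i=0}^d a_i x_1^ix_2^{d-i}$.

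The key observation I would exploit is that each of the four products occurring in the two determinants, when fed the single term $a_jx_1^jx_2^{d-j}$ of $g$, produces the \emph{same} monomial $x_1^{k+j-2}x_2^{2d-k-j-2}$ (with the convention that a negative exponent means the term is absent, in which case the aggregated coefficient vanishes anyway). Hence $dH_f$ is diagonal on the monomial basis: after a short computation the coefficient $a_j$ is sent to $g(d,k,j)\cdot x_1^{k+j-2}x_2^{2d-k-j-2}$, where
\[
g(d,k,j)=(d-k)(d-k-1)j(j-1)-2k(d-k)j(d-j)+k(k-1)(d-j)(d-j-1)=(d-1)\bigl[d(k-j)^2-d(k+j)+2kj\bigr].
\]
Since distinct $j$ give distinct monomials, $\dim\mathrm{Ker}(dH_f)$ is exactly the number of indices $0\le j\le d$ with $g(d,k,j)=0$; this is precisely statement (iii), and the same count is valid for every $k$. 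I would then deduce (i) and (ii) by substituting $k=d$ and $k=d-1$ and reading the roots off the factorizations $g(d,d,j)=d(d-1)(d-j)(d-j-1)$ and $g(d,d-1,j)=(d-1)(d-j)(d^2-dj-3d+2)$, where the congruence $(d-1)(d-2)\equiv 2\pmod d$ isolates the exceptional value $d=2$.

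For part (iv), with $d\ge 3$ the prefactor $d-1$ is nonzero, so the vanishing of $g(d,k,j)$ is governed by the integer-coefficient quadratic $dj^2+(-2dk-d+2k)j+dk(k-1)=0$. A quadratic has at most two roots, so $\dim\mathrm{Ker}(dH_f)\le 2$ immediately; the content is to show that two \emph{distinct integer} roots $j_1<j_2$ in $[0,d]$ force $d=2k$ with $k$ a square. Here I would invoke Vieta's relations $j_1j_2=k(k-1)$ and $j_1+j_2=2k+1-\tfrac{2k}{d}$. Two integer roots make the sum an integer, and since $2k+1\in\ZZ$ this forces $\tfrac{2k}{d}\in\ZZ$; combined with $0<\tfrac{2k}{d}<2$, which holds because $1\le k\le d-1$, it yields $\tfrac{2k}{d}=1$, i.e. $d=2k$. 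Then $j_1,j_2$ are the roots of $t^2-2kt+k(k-1)$, whose discriminant is $4k$, so $j_{1,2}=k\mp\sqrt{k}$; their integrality is equivalent to $k$ being a perfect square, and both lie in $[0,2k]$ since $\sqrt{k}\le k$. Translating $j_1,j_2$ back through $x_1^jx_2^{d-j}$ with $d=2k$ produces exactly the two spanning monomials $x_1^{k-\sqrt{k}}x_2^{k+\sqrt{k}}$ and $x_1^{k+\sqrt{k}}x_2^{k-\sqrt{k}}$.

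I expect the main obstacle to be precisely this integrality step in (iv): once one notices that two integer roots force their sum to be an integer, the divisibility constraint on $\tfrac{2k}{d}$ pins down $d=2k$, after which the square condition on $k$ drops out of the discriminant. Spotting that this single congruence collapses the whole problem is the one genuinely non-mechanical move; everything else — the explicit second partials, the coefficient identity $g(d,k,j)$, and its two specializations — is routine bookkeeping, kept routine by the diagonalization observation.
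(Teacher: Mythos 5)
Your proposal is correct and follows essentially the same route as the paper: apply Lemma \ref{formula} in the binary case, observe that $dH_f$ acts diagonally on the monomial basis with eigenvalue $g(d,k,j)=(d-1)\left[d(k-j)^2-d(k+j)+2kj\right]$, and settle (iv) via Vieta's relations, where the integrality of $j_1+j_2$ forces $2k/d\in\ZZ$ and hence $d=2k$ with discriminant $4k$ a perfect square. The only (harmless) deviations are cosmetic: you derive (i) and (ii) uniformly from the root count of (iii) — justified by noting that $g(d,k,j)=0$ whenever the target monomial would have a negative exponent — rather than treating (i) as a separate direct computation, and you locate the roots in (ii) by a congruence mod $d$ instead of the paper's substitution $j=d-s$.
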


In some cases, one finds a large subset of binary homogeneous polynomials whose differential is injective: 

\begin{corollary}\label{corwithlim}
Let $d\geq 3$ be a prime and $2\leq k\leq d-2$. Let $W_k\subset \CC[x_1,x_2]_d$ be the set of homogenous polynomials of the form $\sum_{i\geq k}^d a_i x_1^{i} x_2^{d-i}$ such that $a_k\neq 0$. Then, for any $f\in W_k$, $\dim(\mathrm{Ker}(dH_f))=0$. 
\begin{proof}
For $\lambda\in \CC^*$ and $f\in W_k$, define $f_\lambda(x_1,x_2)= f(\lambda x_1, x_2)/\lambda^{k}$. One has $\dim(\mathrm{Ker}(dH(f_\lambda)))= \dim(\mathrm{Ker}(dH_f))$. 
Note that $g=\lim_{\lambda\rightarrow 0} f_\lambda(x_1, x_2) = a_k x_1^{k}x_2^{d-k}\neq 0$. By semicontinuity, we have $\dim(\mathrm{Ker}(dH_f))\leq \dim(\mathrm{Ker}(dH(g)))$. By Proposition \ref{binarymonomials}(iii), the latter is the number of indices $0\leq j\leq d$ satisfying the equation $d(k-j)^2 - d(k+j) + 2kj=0$.
If $j=0$, then $k=0$ or $k=1$, which are excluded. For $j\neq 0$, since $d$ is a prime dividing the nonzero integer $2kj$ and $k\leq d-2$, we must have $j=d$. For $j=d$, one finds $d(k-j)^2 - d(k+j) + 2kj=d(k-d)(k-d+1)$. So, given $2\leq k\leq d-2$, there is no $j$ satisfying the equation above. Thus $\dim(\mathrm{Ker}(dH_f))=0$ for all $f\in W_k$. 
\end{proof}
\end{corollary}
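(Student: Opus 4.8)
The plan is to reduce the general statement about all $f\in W_k$ to a single monomial computation via a degeneration argument, exactly as the corollary's hypotheses suggest. First I would exploit the torus action: for $\lambda\in\CC^*$ and $f=\sum_{i\geq k}^d a_i x_1^i x_2^{d-i}\in W_k$, the rescaling $f_\lambda(x_1,x_2)=f(\lambda x_1,x_2)/\lambda^k$ is obtained from $f$ by a composition of an invertible diagonal change of coordinates with an overall scalar. Since the Hessian map is equivariant for such transformations (up to nonzero scalars, because $H(A\circ f)=(\det A)^2\, A\circ H(f)$ for $A\in\mathrm{GL}(\CC^2)$), the differential $dH_{f_\lambda}$ is conjugate to $dH_f$ up to nonzero scalars, so its kernel dimension is unchanged. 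This gives $\dim(\mathrm{Ker}(dH_{f_\lambda}))=\dim(\mathrm{Ker}(dH_f))$ for all $\lambda\neq 0$.

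Next I would take the limit $\lambda\to 0$. Because $f\in W_k$ has lowest-degree term $a_k x_1^k x_2^{d-k}$ with $a_k\neq 0$, the normalization by $\lambda^{-k}$ is chosen precisely so that $f_\lambda\to g:=a_k x_1^k x_2^{d-k}$ as $\lambda\to 0$, and this limit is a nonzero monomial of the type handled by Proposition \ref{binarymonomials}. The map sending a binary form to its differential's kernel dimension is upper semicontinuous (the kernel dimension of a linear map depending algebraically on parameters can only jump up on a closed set, by the rank being lower semicontinuous). Hence $\dim(\mathrm{Ker}(dH_f))\leq \dim(\mathrm{Ker}(dH_g))$. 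It therefore suffices to prove that the monomial $g=a_k x_1^k x_2^{d-k}$ has $\dim(\mathrm{Ker}(dH_g))=0$, which forces $\dim(\mathrm{Ker}(dH_f))=0$ as well.

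By Proposition \ref{binarymonomials}(iii), since $2\leq k\leq d-2$, the quantity $\dim(\mathrm{Ker}(dH_g))$ equals the number of integers $0\leq j\leq d$ solving $d(k-j)^2-d(k+j)+2kj=0$. Here is where the primality of $d$ enters. For $j=0$ the equation collapses to $dk^2-dk=0$, forcing $k\in\{0,1\}$, both excluded. For $1\leq j\leq d$, I would rewrite the left side modulo $d$: since $d\mid d(k-j)^2$ and $d\mid d(k+j)$, the equation implies $d\mid 2kj$. As $d$ is an odd prime (it is $\geq 3$ and prime), $d\nmid 2$, and since $1\leq k\leq d-2$ we have $d\nmid k$; thus $d\mid j$, which with $1\leq j\leq d$ forces $j=d$. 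Finally, substituting $j=d$ gives $d(k-d)^2-d(k+d)+2kd = d\big[(k-d)^2-(k+d)+2k\big]=d(k-d)(k-d+1)$, and for $2\leq k\leq d-2$ neither factor $k-d$ nor $k-d+1$ vanishes, so $j=d$ is not a root either. Hence no $j$ solves the equation and $\dim(\mathrm{Ker}(dH_g))=0$.

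The main obstacle is not any single step but ensuring the degeneration argument is airtight: one must verify that the limit $g$ is genuinely nonzero (guaranteed by $a_k\neq 0$, which is the defining condition of $W_k$) and that semicontinuity is applied in the correct direction, giving only the inequality $\dim(\mathrm{Ker}(dH_f))\le\dim(\mathrm{Ker}(dH_g))$. Since the right-hand side is shown to be $0$ and kernel dimensions are nonnegative, the inequality pins down $\dim(\mathrm{Ker}(dH_f))=0$ exactly; the primality of $d$ is used solely to rule out intermediate roots $0<j<d$ in the quadratic, and the endpoint cases $j=0,d$ are excluded by direct substitution.
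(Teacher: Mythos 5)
Your proposal is correct and follows essentially the same route as the paper: the same rescaling $f_\lambda = f(\lambda x_1,x_2)/\lambda^k$, the same semicontinuity argument degenerating to the monomial $a_k x_1^k x_2^{d-k}$, and the same reduction via Proposition \ref{binarymonomials}(iii) with the prime-divisibility analysis ruling out all roots $j$. The only difference is that you spell out the equivariance justification for $\dim(\mathrm{Ker}(dH_{f_\lambda}))=\dim(\mathrm{Ker}(dH_f))$ and the divisibility step $d\nmid 2$, $d\nmid k$ in more detail than the paper does, which is fine.
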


In Theorem \ref{binaryeven}, the exceptional cases appear when $d=2m^2$. These are also somewhat exceptional instances
from the perspective of the differential of the Hessian map, as witnessed by Proposition \ref{binarymonomials}(iv) and, as a consequence,
by the next result.

\begin{theorem}\label{kernelsofbin}
Let $d\geq 3$ and let $f\in \CC[x_1,x_2]_d$ with $H(f)\neq 0$. Then $\dim H^{-1}(H(f)) \le 1$ and $\dim (\mathrm{Ker}(dH_f))\le 1$, unless $d=2m^2$ and $f=\ell_1^{m^2}\ell_2^{m^2}$ (where $\ell_i$ are linearly independent linear forms). In the latter case, $\dim (\mathrm{Ker}(dH_f))=2$. 

\end{theorem}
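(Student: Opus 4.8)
The plan is to compute $\dim(\mathrm{Ker}(dH_f))$ by reducing to monomials, where Proposition \ref{binarymonomials}(iv) applies, and then to deduce the fiber bound from the kernel bound through a tangent space argument. First I would record that the Hessian map is $\mathrm{GL}(\CC^2)$-equivariant up to the factor $(\det)^2$: for $A\in \mathrm{GL}(\CC^2)$ one has $H(A\circ f)=(\det A)^2\, A\circ H(f)$. Differentiating, $A$ carries $\mathrm{Ker}(dH_f)$ isomorphically onto $\mathrm{Ker}(dH_{A\circ f})$, and since $H$ is homogeneous of degree $2$ in $f$ the kernel is also unchanged under rescaling $f$. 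Thus $f\mapsto \dim(\mathrm{Ker}(dH_f))$ is constant on $\mathrm{GL}(\CC^2)$-orbits and under scaling, and it is upper semicontinuous in $f$. As $H(f)\neq 0$, the form $f$ is not a $d$-th power, hence has at least two distinct linear factors; for any root of multiplicity $m$ (so $1\leq m\leq d-1$) I would move it to $[0:1]$ by an element of $\mathrm{GL}(\CC^2)$ and run the one-parameter degeneration of Corollary \ref{corwithlim} (replace $f$ by $f(\lambda x_1,x_2)/\lambda^{m}$ and let $\lambda\rightarrow 0$), whose limit is the monomial $x_1^{m}x_2^{d-m}$. Upper semicontinuity then gives
\[
\dim(\mathrm{Ker}(dH_f)) \leq \dim(\mathrm{Ker}(dH_{x_1^{m}x_2^{d-m}})).
\]

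By Proposition \ref{binarymonomials}(iv) the right-hand side is $\leq 1$ unless $d=2m$ and $m$ is a perfect square. Minimising over the roots of $f$, the bound $\dim(\mathrm{Ker}(dH_f))\leq 1$ can fail only when \emph{every} root of $f$ yields an exceptional monomial, i.e. only when every root multiplicity equals $d/2$ and $d/2$ is a perfect square. Since the multiplicities sum to $d$, this forces $f$ to have exactly two roots of equal multiplicity $m^2$ with $d=2m^2$, that is $f=\ell_1^{m^2}\ell_2^{m^2}$. In this exceptional case equivariance and Proposition \ref{binarymonomials}(iv) give $\dim(\mathrm{Ker}(dH_f))=\dim(\mathrm{Ker}(dH_{x_1^{m^2}x_2^{m^2}}))=2$, which settles the statement about the differential.

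For the fiber, let $X$ be an irreducible component of $H^{-1}(H(f))$ and let $f'$ be a general, hence smooth, point of $X$. Since $H$ is constant on $X$, we have $T_{f'}X\subseteq \mathrm{Ker}(dH_{f'})$, so $\dim X\leq \dim(\mathrm{Ker}(dH_{f'}))$. Write $Z=\{\ell_1^{m^2}\ell_2^{m^2}\}$ for the exceptional locus (empty unless $d=2m^2$). If $X\not\subseteq \overline{Z}$, then $f'$ may be chosen outside $\overline{Z}$; as $H(f')=H(f)\neq 0$, the first part yields $\dim(\mathrm{Ker}(dH_{f'}))\leq 1$ and hence $\dim X\leq 1$.

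It remains to treat a component $X\subseteq \overline{Z}$. The points of $\overline{Z}\setminus Z$ are $d$-th powers of linear forms (where the two roots collide) together with $0$, and all of these lie in $\{H=0\}$; since $H(f)\neq 0$ on the fiber, this forces $X\subseteq Z$. Finally I would verify that $H|_Z$ has finite fibers: for $f'=t\,\ell_1^{m^2}\ell_2^{m^2}\in Z$, Theorem \ref{negativeresult} and equivariance give $H(f')=t^2\,C(\ell_1,\ell_2)\,(\ell_1\ell_2)^{d-2}$ with $C\neq 0$, so the support of $H(f')$ recovers the unordered pair $\{\ell_1,\ell_2\}$ up to scalars and its coefficient then determines $t$ up to finitely many values; hence $\dim X=0$. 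In all cases $\dim H^{-1}(H(f))\leq 1$. I expect the main obstacle to be exactly this last step: excluding a positive-dimensional fiber component inside the exceptional locus, where the differential has a two-dimensional kernel and the tangent-space inequality alone does not give the bound, so one genuinely needs the finiteness of $H|_Z$.
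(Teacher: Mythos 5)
Your proof is correct, and its core — the kernel bound — follows the same route as the paper: use $\mathrm{GL}(\CC^2)$-equivariance and scaling invariance of $\dim\mathrm{Ker}(dH_f)$, degenerate $f$ to a monomial $x_1^{m}x_2^{d-m}$ via $f(\lambda x_1,x_2)/\lambda^{m}$ with $m$ a root multiplicity, invoke upper semicontinuity, and conclude with Proposition \ref{binarymonomials}(iv). The paper implements this with two specific roots placed at $(0,1)$ and $(1,0)$ (the exponents $i_{\mathrm{min}}$, $j_{\mathrm{min}}$), while you minimize over all roots and observe that failure of the bound forces every multiplicity to equal $d/2$ and to be a perfect square; these are the same idea in slightly different clothing. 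Where you genuinely go beyond the paper is the fiber statement. The paper dispatches it with the one-line remark that $\dim H^{-1}(H(f))\ge q$ forces $\dim\mathrm{Ker}(dH_f)\ge q$ — a claim that, as literally phrased, is only valid when applied at (general) points of each irreducible component of the fiber — and it only invokes this in the case $d\ne 2m^2$, where indeed every point of the fiber satisfies the kernel bound. You make the tangent-space argument precise (general smooth point of each component) and, more importantly, you treat the case about which the paper is silent: a component contained in the exceptional locus $Z=\{\ell_1^{m^2}\ell_2^{m^2}\}$, where the kernel is $2$-dimensional and the tangent-space inequality gives nothing. Your finiteness argument for $H|_{Z}$ — the support of $H(f')=t^{2}\,C\,(\ell_1\ell_2)^{d-2}$ recovers the pair $\{\ell_1,\ell_2\}$ up to scalars, and the coefficient then pins down $t$ up to finitely many choices, while $\overline{Z}\setminus Z$ consists of $d$-th powers and $0$, which lie in $\{H=0\}$ — is correct and closes this gap. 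As a result, your write-up establishes the bound $\dim H^{-1}(H(f))\le 1$ for every $f$ with $H(f)\ne 0$, including when the fiber meets (or consists of) exceptional forms, which is more than the paper's proof explicitly proves.
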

\begin{proof}
First, suppose $d\neq 2m^2$. It is sufficient to prove that $\dim \mathrm{Ker}(dH_f)\le 1$. 
(This is because if  the dimension of the fiber at $H(f)$ satisfies $\dim H^{-1}(H(f)) \geq q$ then $\dim (\mathrm{Ker}(dH_f))\geq q$, for any $q\in \NN$.)
For any $k\ge 1$, let $W_k$ be the set of all $g\in \CC[x_1,x_2]_d$ such that $g=\sum _{i	\geq k}^{d} a_ix_1^ix_2^{d-i}$ and $a_k\ne 0$.
Up to the action of $\mathrm{GL}(\mathbb C^2)$, we may assume $f(0,1)=0$, i.e. we may assume $f=\sum^{d}_{i\geq 1} a_ix_1^{i}x_2^{d-i}$. Let $i_{\mathrm{min}}$ be the smallest positive integer such that $a_{i_{\mathrm{min}}} \ne 0$ and hence $f\in W_{i_{\mathrm{min}}}$. We have $i_{\mathrm{min}}<d$, because $H(f)\ne 0$ and so $1\le i_{\mathrm{min}}\le d-1$. As in the proof of Corollary \ref{corwithlim}, for any $\lambda\in \CC^*$, define $g_\lambda(x_1,x_2) = f(\lambda x_1,x_2)/\lambda^{i_{\mathrm{min}}}$. Note that $g = a_{i_{\mathrm{min}}}x_1^{i_{\mathrm{min}}}x_2^{d-i_{\mathrm{min}}} = \lim_{\lambda\rightarrow 0} g_\lambda$. One then has $\dim (\mathrm{Ker}(dH_{g_\lambda})) = \dim (\mathrm{Ker}(dH_f))$ for all $\lambda\in \CC^{*}$. By semicontinuity, we have $\dim (\mathrm{Ker}(dH_f))\le \dim (\mathrm{Ker}(dH_g))\le 1$, where the last inequality is Proposition \ref{binarymonomials}(iv). \\
Now, assume $d=2m^2$. Since $H(f)\neq 0$ and $d\geq 3$, we may assume that $f$ has at least two distinct zeros. Up to the action of $\mathrm{GL}(\CC^2)$, we may assume $f(0,1)=f(1,0)=0$, i.e. we may assume $a_0=a_d=0$. The first part of this proof works if $i_{\mathrm{min}} \ne m^2$. Thus, we may assume $i_{\mathrm{min}} = m^2$. Let $j_{\mathrm{min}}$ be the minimal positive integer such that $a_{d-j_{\mathrm{min}}}\ne 0$. Since $a_d=0$, $j_{\mathrm{min}}>0$. Define $h_\lambda(x_1,x_2) = f(x_1,\lambda x_2)/\lambda^{j_{\mathrm{min}}}$. Since $h = a_{d-j_{\mathrm{min}}}x_1^{d-j_{\mathrm{min}}}x_2^{j_{\mathrm{min}}} =\lim _{\lambda\rightarrow 0} h_\lambda$, by semicontinuity we obtain $\dim (\mathrm{Ker}(dH_f))\le \dim (\mathrm{Ker}(dH_h))$. If $j_{\mathrm{min}}<m^2$, then Proposition \ref{binarymonomials}(iv) gives $\dim (\mathrm{Ker}(dH_f))\le 1$. Otherwise, $i_{\mathrm{min}} = j_{\mathrm{min}} = m^2$ and so $f = x_1^{m^2} x_2^{m^2}$. In this case, Proposition \ref{binarymonomials}(iv) shows that $\dim(\mathrm{Ker}(H_f)) = 2$. \end{proof}

\begin{remark}
Theorem \ref{kernelsofbin} also shows that the Hessian map at the monomial counterexamples is not necessarily a local embedding. 
\end{remark}

\begin{small}

\end{small}
\end{document}